%
%
\documentclass[article, letterpaper, oneside]{aram}

%
\usepackage[color]{aram}

%
\usepackage[]{aramtikz}

\graphicspath{ {./figures/} }

\addbibresource{biblio.bib}

\setupheaders

\partialordergenerator{f}{F} 
\partialordergenerator{fp}{F'} 
\partialordergenerator{R}{\msR} 
\partialordergenerator{poc}{\mcC} 
\partialordergenerator{p}{P} 
\partialordergenerator{B}{B} 
\partialordergenerator{Div}{|} 

\DeclareMathOperator{\support}{supp}
\renewcommand\supp{\support}

\newcommand\xequal[1]{\stackrel{\mathclap{\normalfont\mbox{#1}}}{=}}

\title{Thin MC left regular bands}
\author{Aram Dermenjian}
\email{aram.dermenjian.math@gmail.com}
\thanks{The author was funded by NSERC and Heilbronn Institute for Mathematical Research.}
\subjclass[2020]{20M25, 06F05, 05C25, 05C90}
\keywords{Left regular bands, adjacency graph, face poset, support lattice, meet-semilattice}

\begin{document}
%
%

\begin{abstract}
    We define MC left regular bands and study their adjacency graphs.
    We prove that for thin MC left regular bands, the adjacency graph is particularly nice and is represented by edge labeled graphs where every simple cycle has an even number of edges.
    Conversely, we define a set of graphs which we call thin LRB graphs which encode rank two thin MC left regular bands.
    Along the way, we provide a criterion for showing when the face poset of a left regular band is a meet-semilattice.
\end{abstract}

\mytitle

\section{Introduction}
Bands are idempotent semigroups and have been studied since at least 1940 when Fritz Klein-Barmen started studying what happens to semilattices if the property of commutativity was taken away in \cite{KleinBarmen1940}.
The first use of the term ``band'' appeared in \cite{Clifford1954} when Clifford started studying bands in more depth.
Left regular bands are bands with the additional identity $xyx = xy$.
The study of left regular bands came shortly after Klein-Barmen when in 1947 Sch\"utzeberger started studying non-commutative lattices himself in \cite{Schuetzenberger1947}.

In recent years, there has been a push to understand the structure of left regular bands.
One of the reasons for this is due to the number of various combinatorial objects which contain the structure of a left regular bands.
Some of these objects include
Tsetlin's library \cite{BidigareHanlonRockmore1999, Brown2000},
(real central) hyperplane arrangements \cite{BidigareHanlonRockmore1999, Brown2000, BrownDiaconis1998},
complex hyperplane arrangements \cite{Bjoerner2008},
matroids \cite{Brown2000},
oriented matroids \cite{BrownDiaconis1998},
interval greedoids \cite{Bjoerner2008},
oriented interval greedoids \cite{SaliolaThomas2011}, and
certain bicolored graphs \cite{ChungGraham2012}.
Another reason for the push to understand left regular bands is due to their rich representation theory
and applications to probability theory through random walks and Markov chains(see \cite{AguiarMahajan2010, AyyerSchillingSteinbergThiery2015, Brown2000, Brown2004, MargolisSaliolaSteinberg2015a, Saliola2006, Saliola2009, Steinberg2016}).
Moreover, from an order theoretic perspective, Green's $\msR$-preorder is an order precisely when a band is a left regular band giving us a rich combinatorial way to study these semigroups.

In this article, we introduce an adjacency graph on an arbitrary left regular band by letting the vertices be the chambers and having an edge between two chambers if they both cover a common facet in the face poset (the dual of Green's $\msR$-order).
We study a particular subset of left regular bands which we call \emph{thin MC left regular bands}.
Thin MC left regular bands are generalizations of real hyperplane arrangements, oriented matroids and certain stratifications of complex hyperplane arrangements.
A thin MC left regular band is a left regular band such that every facet is covered by precisely two chambers (thin), whose face poset is a meet-semilattice (M) and whose adjacency graph is connected (C).
We endow the adjacency graph with edge labels whose labeling is given by the support lattice.
This labeling enables us to understand the structure of the adjacency graph much more clearly and allows us to give a nice description of the minimal cycles in the adjacency graph given in the following theorem which we prove in \autoref{sec:mc-left-regular-bands}.
\begin{reptheorem}{cor:to-graph}
    Let $S$ be a finite thin MC left regular band.
    Then the adjacency graph $\Gamma_\mcC$ is an edge-labeled graph where every simple cycle has an even number of edges labeled by elements with the same support and each label appears exactly once.
\end{reptheorem}

Due to the structure of the adjacency graph that appears for thin MC left regular bands we define a graph called a thin left regular band graph to encapsulate this structure.
Thin left regular band graphs are edge labeled graphs in which every label appears exactly once and every minimal cycle has an even number of edges.
We then construct a (rank $2$ thin MC) left regular band from the thin left regular band graph by explicitly describing the multiplication between elements.
This allows us to have a bijection between thin left regular band graphs and rank $2$ thin MC left regular bands.
We end this paper by giving future directions of this project and some open problems which we hope to solve in upcoming work.

\section{Left regular bands}\label{sec:left-regular-bands}
We start by giving basic definitions.
The interested reader is referred to \cite{CliffordPreston2014} for more details.
A \defn{semigroup $(S, \cdot)$} is a set $S$ with a binary operation $\cdot: S \times S \to S$ that satisfies the associativity property.
If the binary operation is evident, by abuse of notation we let $S$ denote the semigroup.
Additionally, when using $\cdot$ as the binary operation, for $a, b \in S$ we let $ab$ be an equivalent way of writing $a \cdot b$.

This article studies a particular type of semigroup called a left regular band.
A \defn{band $S$} is a semigroup in which every element is idempotent, \ie $x^2 = x$ for all $x \in S$.
A \defn{left regular band $S$} is a band such that $xyx = xy$ for all $x, y \in S$.
Frequently we will have an identity element in our semigroup in which case our semigroup will be referred to as a \defn{monoid}.
If our left regular band has an identity we will call it a \defn{left regular band monoid}.

\subsection{Example - Semilattices}
Our first main example will be meet-semilattices.
For further details on lattice theory and order theory in general the interested reader is referred to \cite{LatticeTheory}.

Recall that a \defn{poset $(P, \pleq)$} is a set $P$ with a partial order $\pleq$ which is reflexive, anti-symmetric and transitive.
For any two elements $x, y \in P$ let $x \pmeet y$ denote the \defn{meet} (the greatest lower bound) of $x$ and $y$ if it exists and let $x \pjoin y$ denote the \defn{join} (the least upper bound) of $x$ and $y$ if it exists.
If the meet (join) exists for every pair of elements $x, y \in P$ then we say the poset is a \defn{meet-semilattice} (\defn{join-semilattice}).
A poset is called a \defn{lattice} if it is both a meet-semilattice and a join-semilattice.
Both meet-semilattices and join-semilattices are examples of left regular bands.
In fact, since $x \pmeet y = y \pmeet x$, they are commutative left regular bands.
It is well known that a left regular band is commutative if it is isomorphic to a meet-semilattice.

As semilattices are used heavily in this paper, we define some further definitions pertaining to semilattices.
An \defn{interval $[x,y]_P$} in a poset $(P, \pleq)$ is the set of elements weakly between $x$ and $y$, \ie $[x,y]_P = \set{z \in P}{x \pleq z \pleq p}$.
We say that an element $x$ is \defn{covered by} $y$ (or that $y$ \defn{covers} $x$), denoted $x \pcover y$, if $x \pleq y$ and $[x,y]_P = \left\{ x,y \right\}$.
An \defn{atom} in a (finite) meet-semilattice is an element which covers the unique minimal element.
Similarly, a \defn{coatom} in a (finite) join-semilattice is an element which is covered by the unique maximal element.
Finally, the \defn{Hasse diagram} of a poset is the graph with vertex set $P$ and edge set $(x, y)$ whenever $x \pcover y$.
By convention, we draw $x$ below $y$ in the Hasse diagram in order to visualize the order.

Two examples of posets can be viewed at the end of this section in \autoref{ex:hyp-arr}.
The graphs given are the Hasse diagrams of two different posets which we define later.
On the left is the face poset which is a meet-semilattice whose atoms are given by the vertices $F_i$.
The interval $[0, R_3]$ in this poset is the set of elements $\left\{ 0, F_2, F_3, R_3 \right\}$.
On the right is the support lattice which is both a meet-semilattice and a join-semilattice.

\subsection{Green's \mt{\msR}{R}-order}
Recall that a preorder is a relation which is reflexive and transitive, \ie a partial order without anti-symmetry.
Given a band $S$, \defn{Green's $\msR$-preorder} on $S$ is the preorder where for all $x, y \in S$
\[
    x \Rleq y \text{ if and only if } x = yx.
\]
This preorder is transitive for all semigroups and reflexive for bands.
It turns out that this preorder is a partial order precisely when our band is a left regular band.
As this paper exclusively studies left regular bands, we will call this partial order \defn{Green's $\msR$-order}.

\subsection{Face poset}
Due to the connections with geometric structures, our focus will be on a slightly different order, the dual of Green's $\msR$-order for left regular bands.
Let $S$ be a band with elements $x$ and $y$ and let $\fleq$ be the partial order
\[
    y \fleq x \text{ if and only if } x = yx \text{ if and only if } y \Rgeq x.
\]
We let $(S, \fleq)$ denote this poset and call it the \defn{face poset}.

The face poset has a unique minimal element whenever the left regular band has an identity, but it rarely has a unique maximal element.
The maximal elements are known as \defn{chambers} and the elements covered by the chambers are known as \defn{facets}.
Let $\mcC_S$ denote the set of all chambers in $(S, \fleq)$.
If there is no ambiguity, we use $\mcC$ instead of $\mcC_S$.
The \defn{rank} of a left regular band is the length of the shortest path from a minimal element to a chamber.

It turns out that there is a second equivalent way to define a chamber:
\begin{lemma}[\cite{Brown2000}]
    \label{lem:equiv-chamber}
    Let $S$ be a left regular band.
    The following are equivalent:
    \begin{itemize}
        \item $C$ is a chamber, \ie is a maximal element in $(S, \fleq)$, and
        \item for all $x \in S$, $Cx = C$.
    \end{itemize}
\end{lemma}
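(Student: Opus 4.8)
The plan is to prove the two implications separately by directly unwinding the definition of the face order, in which $y \fleq x$ means $x = yx$. Both directions turn out to be short, so essentially all the work lies in setting up the correct comparison in the poset and translating carefully between $\fleq$ and the defining semigroup equation.

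First I would handle the implication that the second condition implies the first. Assuming $Cx = C$ for every $x \in S$, suppose some $z \in S$ satisfies $C \fleq z$, which by definition means $z = Cz$. Since $Cz = C$ by hypothesis, we immediately obtain $z = C$. Thus no element lies strictly above $C$ in the face poset, so $C$ is a maximal element, i.e.\ a chamber.

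For the reverse implication I would argue as follows. Assume $C$ is a chamber and fix an arbitrary $x \in S$. The key observation is that the product $Cx$ always sits weakly above $C$ in the face order: using idempotency, $C(Cx) = C^2 x = Cx$, which is precisely the statement $C \fleq Cx$. Maximality of $C$ then forces $Cx = C$, as desired. This single computation is really the crux of the lemma — once one notices that left-multiplication by $C$ can only move an element up (weakly) in $(S, \fleq)$, maximality collapses $Cx$ back down to $C$.

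I expect the only genuine subtlety to be bookkeeping the direction of the order and keeping the equivalence $C \fleq Cx \iff Cx = C(Cx)$ straight; there is no substantive obstacle beyond that. It is worth noting that the left regular band identity $xyx = xy$ is never invoked directly in the computation — it enters only implicitly, since it is what guarantees that $\fleq$ is an honest partial order and hence that ``maximal element'' is a well-posed notion. The argument itself relies solely on idempotency and the definition of the face order.
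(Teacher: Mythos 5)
Your proof is correct. The paper itself gives no proof of this lemma (it is cited from Brown), but your argument is the standard one: the forward direction is immediate from the definition of $\fleq$, and the reverse direction rests on the observation $C \fleq Cx$, which is exactly part (2) of \autoref{lem:fleq} as proved in the paper ($C(Cx) = C^2x = Cx$), combined with maximality. Your closing remark about where the left regular identity enters (only in making $\fleq$ antisymmetric, hence ``maximal'' well-posed) is also accurate.
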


We give some properties for the face poset in the following lemma which will be useful later on.
We provide a proof for each property even though these properties are already well-known.
\begin{lemma}
    \label{lem:fleq}
    For all $x, y, z \in S$ where $S$ a left regular band.
    Then
    \begin{enumerate}
        \item $xy \fleq y$ implies $x \fleq y$,
        \item $x \fleq xy$, and
        \item $x \fleq z$ and $y \fleq z$ implies $xy \fleq z$.
    \end{enumerate}
\end{lemma}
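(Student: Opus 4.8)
The plan is to unfold the definition of $\fleq$ in each of the three statements and reduce everything to direct computations using idempotency ($x^2 = x$) and associativity. These are the only ingredients needed; notably, none of the three facts appears to require the full left regular band identity $xyx = xy$, only the band structure.

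For property (2), I would note that $x \fleq xy$ is by definition the assertion $xy = x(xy)$. Reassociating the right-hand side gives $x(xy) = (xx)y = x^2 y$, and idempotency collapses this to $xy$. The defining equality therefore holds, so $x \fleq xy$ follows at once.

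For property (1), I would first rewrite the hypothesis $xy \fleq y$ according to the definition: it states $y = (xy)y$. Reassociating and applying $y^2 = y$ yields $(xy)y = x(yy) = xy$, so the hypothesis is equivalent to $y = xy$. But $y = xy$ is precisely the defining condition for $x \fleq y$, so the implication holds. In fact the two conditions coincide, so this is really an equivalence and the stated direction is the easy one.

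For property (3), I would start from the two hypotheses $z = xz$ and $z = yz$, which encode $x \fleq z$ and $y \fleq z$, and aim to verify the defining equality $z = (xy)z$ for $xy \fleq z$. The key step is to chain the substitutions: reassociate $(xy)z = x(yz)$, replace $yz$ by $z$ using the second hypothesis to obtain $xz$, and then replace $xz$ by $z$ using the first hypothesis. This gives $(xy)z = z$ and hence $xy \fleq z$. The only point requiring care throughout is tracking on which side each idempotent or each substitution is applied, since the multiplication is noncommutative; this bookkeeping, rather than any conceptual difficulty, is the main thing to get right.
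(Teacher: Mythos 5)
Your proof is correct and takes essentially the same route as the paper: each part is verified by unfolding the definition of the face-poset order ($a \fleq b$ iff $b = ab$) and applying associativity and idempotency in exactly the computations the paper uses. Your side observation that the left regular identity $xyx = xy$ is never invoked is also consistent with the paper's argument, which likewise only uses the band axioms.
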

\begin{proof}
    Suppose $x$, $y$, and $z$ are elements in a left regular band $S$.
    \begin{enumerate}
        \item $xy \fleq y$ implies $y = (xy)y = x(yy) = xy$ implying $x \fleq y$.
        \item $x(xy) = (xx)y = xy$ implying $x \fleq xy$.
        \item As $x \fleq z$ and $y \fleq z$ we know $xz = z$ and $yz = z$.
            Then $(xy)z = x(yz) = xz = z$ implying $xy \fleq z$.
    \end{enumerate}
\end{proof}

\subsection{Support map}
\label{ssec:support-map}
Another lattice which will be useful in this article uses the support map which we define next.
Recall that a \defn{principal ideal} of a band $S$ is a two-sided ideal $S^1xS^1$ generated by an element $x \in S$ where $S^1$ is the band $S$ together with an identity if $S$ does not already have one.
In \cite{Clifford1941} Clifford showed that the principal ideals of a band are closed under intersection ($S^1aS^1 \cap S^1bS^1 = S^1abS^1$) and that these principal ideals form a meet-semilattice.
The map which takes $S$ to the set of principal ideals is known as the \defn{support map} and the lattice of principal ideals is known as the \defn{support lattice}.
For the rest of the paper we let
\[
    \supp : S \to (L, \cleq)
\]
denote the support map which sends $S$ to the support lattice.
Since this map sends $S$ to a lattice, then for $x, y \in S$ we have $\supp(xy) = \supp(x) \join \supp(y)$ in the support lattice.
In terms of left regular bands, since $xyx = xy$, the support map can be simplified so that $\supp(y) \cleq \supp(x)$ if and only if $xy = x$
A nice consequence of the support map is that it gives us an equivalent way of viewing chambers.
In particular, an element is a chamber in $S$ if and only if it gets mapped to the maximal element in the support lattice.

The next lemma describes when the product of two facets is a chamber.
\begin{lemma}
    \label{lem:mult-support}
    Let $S$ be a left regular band and let $x$ and $y$ be elements in $S$ and $C$ a chamber.
    If $x \fcover C$ and $y \fcover C$ then $xy = C$ if and only if $\supp(x) \neq \supp(y)$.
\end{lemma}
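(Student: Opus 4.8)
The plan is to first trap the product $xy$ between two face-poset elements and then read off the answer from the support lattice. Since $x \fcover C$, the interval $[x,C]$ in the face poset is exactly $\{x, C\}$, so it suffices to show $x \fleq xy \fleq C$. The lower bound is immediate from \autoref{lem:fleq}(2), which gives $x \fleq xy$. For the upper bound I would compute $(xy)C = x(yC) = xC = C$, using that $y \fleq C$ and $x \fleq C$ mean $yC = C$ and $xC = C$; hence $xy \fleq C$. Consequently $xy \in \{x, C\}$, and by the support characterization $xy = x$ if and only if $\supp(y) \cleq \supp(x)$ I obtain the pivotal equivalence: $xy = C$ if and only if $\supp(y) \not\cleq \supp(x)$.

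Given this equivalence, one direction is easy. If $\supp(x) = \supp(y)$, then $\supp(y) \cleq \supp(x)$, so $xy = x$; since $x$ is a facet we have $x \neq C$, hence $xy \neq C$. This proves the contrapositive of the forward implication (if $xy = C$ then $\supp(x) \neq \supp(y)$).

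The main work is the converse: assuming $\supp(x) \neq \supp(y)$, I must rule out $\supp(y) \cleq \supp(x)$, and this is the step I expect to be the crux. The key idea is to bring in the reversed product $yx$. By the symmetric version of the argument above, $yx \in \{y, C\}$, with $yx = C$ if and only if $\supp(x) \not\cleq \supp(y)$. Suppose for contradiction that $\supp(y) \cleq \supp(x)$; since $\supp(x) \neq \supp(y)$ the inclusion is strict, so $\supp(x) \not\cleq \supp(y)$ and therefore $yx = C$. But then $\supp(C) = \supp(yx) = \supp(y) \join \supp(x) = \supp(x)$, because $\supp(y) \cleq \supp(x)$. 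As $C$ is a chamber, $\supp(C)$ is the maximal element of $L$, forcing $\supp(x)$ to be maximal; by the characterization of chambers in the support lattice this makes $x$ a chamber, contradicting $x \fcover C$ (which forces $x \neq C$ and $x$ non-maximal). Hence $\supp(y) \not\cleq \supp(x)$, and the pivotal equivalence yields $xy = C$. The only subtlety to watch is keeping the asymmetry between $xy$ and $yx$ straight, since it is precisely the product $yx$, whose support collapses to the top of $L$, that produces the contradiction.
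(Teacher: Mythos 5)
Your proof is correct, and at its core it follows the same strategy as the paper: trap $xy$ in the two-element interval $[x,C]=\{x,C\}$ (the paper does this via \autoref{lem:fleq}) and then decide between $x$ and $C$ using the characterization $xy = x \iff \supp(y) \cleq \supp(x)$. Where you genuinely diverge is in the converse direction. The paper simply asserts that $\supp(x)\neq\supp(y)$ makes $\supp(xy)=\supp(x)\join\supp(y)$ strictly larger than $\supp(x)$, which quietly assumes $\supp(y)\not\cleq\supp(x)$; your argument confronts exactly this possibility, bringing in the reversed product $yx$ and showing that $\supp(y)\cleq\supp(x)$ together with $\supp(y)\neq\supp(x)$ would force $yx=C$, hence $\supp(x)=\supp(C)$, making $x$ a chamber and contradicting $x\fcover C$. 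So your route is slightly longer but makes explicit a step the paper leaves implicit, and nothing in your argument fails.
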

\begin{proof}
    If $\supp(x) = \supp(y)$ then $\supp(xy) = \supp(x)$ implying $xy = x$ and $yx = y$.
    Since $x \fcover C$ and $y \fcover C$ this implies $C \neq x = xy$ as desired.

    If $\supp(x) \neq \supp(y)$ then $\supp(xy) = \supp(x) \join \supp(y)$ is strictly above $\supp(x)$ in the support lattice.
    But this implies $\supp(xy) = \supp(C')$ for some $C' \in \mcC$.
    In other words $xy \in \mcC$.
    By \autoref{lem:fleq} we know $xy \fleq C$ implying that $xy = C$ as desired.
\end{proof}

\subsection{Example - Hyperplane arrangements}
\label{ssec:hyp-arr}
We next give an important example which is one of the motivating factors for this research.
We give some basic definitions and refer the reader to \cite{TopicsInHypArr} for a more detailed introduction to hyperplane arrangements.

A (central) \defn{hyperplane arrangement} $\msA$ is a finite set of linear hyperplanes in $\mbbR^n$.
Each hyperplane $H \in \msA$ determines two open half-spaces in $\mbbR^n$ which we denote by $H^+$ and $H^-$ arbitrarily.
Note that once a choice on ``positive'' and ``negative'' sides has been made, these choices are fixed.
We let $H^0$ denote the hyperplane itself.

A \defn{face} $X$ of a hyperplane arrangement $\msA$ is then the intersection of some number of hyperplanes and some open half-space of the remaining hyperplanes:
\[
    X = \bigcap_{H \in \msA} H^{\sigma_H(X)} \qquad \text{where }\sigma_H(X) \in \left\{ 0, +, - \right\}.
\]
We let $\sigma(X) = \left( \sigma_H(X) \right)_{H \in \msA}$ denote the \defn{sign sequence} of a face $X$ for some ordering of the hyperplanes in $\msA$.
Let $\msF_\msA$ denote the set of all faces of $\msA$.
The \defn{face poset $\left( \msF_\msA, \fleq \right)$} is the set of faces together with the partial order
\[
    X \fleq Y \text{ if and only if } X \subseteq \ov{Y}.
\]
The set of maximal elements in the face poset are known as \defn{chambers} and is denoted by $\mcC_\msA$.

We next define a left regular band structure on the faces of an arrangement.
For $X, Y \in \msF_\msA$, let $X \circ Y$ be the face whose sign sequence is given by
\[
    \sigma_H(X \circ Y) = \begin{cases}
        \sigma_H(X)& \text{if }\sigma_H(X) \neq 0,\\
        \sigma_H(Y)& \text{otherwise}.
    \end{cases}
\]
Intuitively, what this map does is it takes a generic point in the interior of $X$ and translates the point an infinitesimal distance towards a generic point in the interior of $Y$.
The face $X \circ Y$ is then the face that the generic point of $X$ lands in after this translation.
It turns out that $(\msF_{\msA}, \circ)$ is a left regular band!

It can be verified that the definition of a face poset for a hyperplane arrangement coincides with the definition of a face poset given for left regular bands.
In other words,
\[
    X \fleq Y \iff X \subseteq \ov{Y} \iff X \circ Y = Y.
\]

The support map has an analog in terms of hyperplane arrangements.
For any face $X$ let the support of $X$ be the set of hyperplanes which contain $X$, \ie $\supp(X) = \set{H \in \mcA}{X \subseteq H^0}$.
Ordering these sets by reverse inclusion gives us the support lattice.

\begin{example}
    \label{ex:hyp-arr}
    Let's review these terms with a concrete example.
    Assume we have the hyperplane arrangement $\msA = \left\{ H_1, H_2, H_3 \right\}$.
    This can be seen in the following figure in which the faces have been labeled.
    Here we have $13$ faces including the $6$ chambers ($R_0,\, R_1,\, R_2,\,\ldots,\,R_5$), the $6$ facets ($F_0, \ldots, F_5$) and the center.
    \begin{center}
        {\footnotesize
        \begin{tikzpicture}
            [
                scale=0.7,
                fdomain/.style={fill=blue!15!white,color=blue!15!white,opacity=0.3},
                vdomain/.style={color=blue!85!white},
                face/.style={draw=red!95!black,fill=red!95!black,color=red!95!black,ultra thick, -}, 
                norm/.style={black,->},
                vertex/.style={inner sep=1pt,circle,draw=green!85!black,fill=green!85!black,thick},
            ]
            %
            \rs[1][white]{3}{}[30]
            %
            \draw[norm] (0) -- (4) node[below left] {\tiny$e_1$};
            \draw[norm] (0) -- (5) node[below] {\tiny$e_2$};
            \draw[norm] (0) -- (6) node[below right] {\tiny$e_3$};
            %
            \rs[3.5][ultra thick]{3}{}[60]
            \node[above right] at (1) {$H_{3}$};
            \node[above left] at (2) {$H_{1}$};
            \node[left] at (3) {$H_{2}$};
            \rs[3][dashed]{3}{}[240]
            \draw[face] (0) -- (2) node[right] {$\mathbf{F_0}$};
            \draw[face] (0) -- (3) node[above] {$\mathbf{F_1}$};
            \draw[face] (0) -- (4) node[right] {$\mathbf{F_2}$};
            \draw[face] (0) -- (5) node[left] {$\mathbf{F_3}$};
            \draw[face] (0) -- (6) node[above] {$\mathbf{F_4}$};
            \draw[face] (0) -- (1) node[left] {$\mathbf{F_5}$};
            %
            %
            \fill[fdomain] (0) -- (1) -- (2) -- cycle {}; 
            \fill[fdomain] (0) -- (3) -- (2) -- cycle {}; 
            \fill[fdomain] (0) -- (3) -- (4) -- cycle {}; 
            \fill[fdomain] (0) -- (5) -- (4) -- cycle {}; 
            \fill[fdomain] (0) -- (5) -- (6) -- cycle {}; 
            \fill[fdomain] (0) -- (1) -- (6) -- cycle {}; 
            \node[vdomain] at (270:2.2) {$R_0$};
            \node[vdomain] at (330:2.2) {$R_1$};
            \node[vdomain] at (30:2.2) {$R_2$};
            \node[vdomain] at (90:2.2) {$R_3$};
            \node[vdomain] at (150:2.2) {$R_4$};
            \node[vdomain] at (210:2.2) {$R_5$};
            \node[vertex] at (0) {}; 
        \end{tikzpicture}
        }
    \end{center}
    The three vectors $e_1,\,e_2,\,e_3$ are the normal vectors and tell us which side the positive open half-space is for each hyperplane.
    With this information we are able to give sign sequences for faces.
    For example, the sign sequence associated to $F_0$ is $(0,\,+,\,+)$, the sign sequence associated to $F_2$ is $(-,\,-,\,0)$ and the sign sequence associated to $R_4$ is $(+,\,-,\,-)$.
    To understand how $\circ$ works, let us study at $F_2 \circ R_4$.
    In this case we have $(-,\,-,\,0) \circ (+,\,-,\,-) = (-,\,-,\,-)$ which is the sign sequence associated to $R_3$.
    This helps us understand the geometric intuition behind this operation.

    The following figure gives the face poset on the left and the support lattice on the right.
    \begin{center}
        \begin{tikzpicture}
            \begin{scope}[shift={(0,0)}]
                \node (0) at (0,0) {$0$};
                \node (F0) at (-2.5, 1) {$F_0$};
                \node (F1) at (-1.5, 1) {$F_1$};
                \node (F2) at (-0.5, 1) {$F_2$};
                \node (F3) at (0.5, 1) {$F_3$};
                \node (F4) at (1.5, 1) {$F_4$};
                \node (F5) at (2.5, 1) {$F_5$};
                \node (R0) at (-2.5, 2) {$R_0$};
                \node (R1) at (-1.5, 2) {$R_1$};
                \node (R2) at (-0.5, 2) {$R_2$};
                \node (R3) at (0.5, 2) {$R_3$};
                \node (R4) at (1.5, 2) {$R_4$};
                \node (R5) at (2.5, 2) {$R_5$};
                \draw (0.north) -- (F0.south);
                \draw (0.north) -- (F1.south);
                \draw (0.north) -- (F2.south);
                \draw (0.north) -- (F3.south);
                \draw (0.north) -- (F4.south);
                \draw (0.north) -- (F5.south);
                \draw (F0.north) -- (R0.south);
                \draw (F0.north) -- (R1.south);
                \draw (F1.north) -- (R1.south);
                \draw (F1.north) -- (R2.south);
                \draw (F2.north) -- (R2.south);
                \draw (F2.north) -- (R3.south);
                \draw (F3.north) -- (R3.south);
                \draw (F3.north) -- (R4.south);
                \draw (F4.north) -- (R4.south);
                \draw (F4.north) -- (R4.south);
                \draw (F5.north) -- (R5.south);
                \draw (F5.north) -- (R0.south);
                \node at (0, -0.5) {Face poset $(\msF_\msA, \fleq)$};
            \end{scope}
            \begin{scope}[shift={(6,0)}]
                \node (0) at (0,0) {$\msA$};
                \node (H3) at (1.25,1) {$\left\{ H_3\right\}$};
                \node (H2) at (0,1) {$\left\{ H_2\right\}$};
                \node (H1) at (-1.25,1) {$\left\{ H_1\right\}$};
                \node (T) at (0, 2) {$\varnothing$};
                \draw (0.north) -- (H1.south);
                \draw (0.north) -- (H2.south);
                \draw (0.north) -- (H3.south);
                \draw (T.south) -- (H1.north);
                \draw (T.south) -- (H2.north);
                \draw (T.south) -- (H3.north);
                \node at (0, -0.5) {Support lattice};
            \end{scope}
        \end{tikzpicture}
    \end{center}
    In this example, the support of $0$ is $\msA$, the support of $F_0$ and $F_3$ is $\left\{ H_1 \right\}$, and the support of every chamber is $\varnothing$.
\end{example}

\section{MC Left Regular Bands}\label{sec:mc-left-regular-bands}
In this section we describe a new family of left regular bands which we call MC left regular bands.
We then give a nice description of its adjacency graph.

\subsection{Adjacency graph}
\label{ssec:adjacency-graph}
We first define a particular graph for left regular bands which will be useful in our study.
We start by defining some basic terms for graphs.
An \defn{edge labeled graph $\Gamma$} is a pair $(V, E)$ where $V$ is a set of vertices and $E$ is a set of triplets $(V_1, V_2, \ell)$ where $V_1$ and $V_2$ are the (vertex) endpoints of the edge and $\ell$ is the label.
We assume our graphs to be \defn{simple}, \ie between every two vertices there is at most one edge, and for every edge $(V_1, V_2, \ell)$ then $V_1 \neq V_2$ (no loops).
A \defn{path} is a sequence of vertices $V_1, \ldots, V_n$ and a \defn{cycle} is a path where $V_1 = V_n$.
The length of a path is the number of edges (or the number of vertices minus $1$).
If a cycle has no repeated vertices (except $V_1 = V_n$) then we say the cycle is \defn{simple}.
Given two edges $e_1$ and $e_2$, then a \defn{minimal cycle containing $e_1$ and $e_2$} is a simple cycle of minimal length which passes through the edges $e_1$ and $e_2$.

Let $S$ be a finite left regular band and let $\mcC_S$ (or $\mcC$) denote the set of chambers for $S$, \ie the set of all maximal elements in the face poset $(S, \fleq)$.
We will define an adjacency graph with a particular edge labeling which will help in future proofs.
For each $x \in S$ which is covered by a chamber of the face poset, recall that $\supp(x)$ is the support of $x$.
The \defn{fiber} of $\supp(x)$ is then the inverse image of the support map and is, in general, a set of multiple elements in $S$ which are all covered by chambers.
Without loss of generality we give some ordering to the elements in the fiber, \ie given the element $\supp(x)$ in the image of the support map for $x$, for each $y \in \supp^{-1}(x)$ we associate a (unique) number $i \in [n]$ where $n = \order{\supp^{-1}(x)}$.
By abuse of notation, we associate each facet $x$ with the notation $x_i$ or $(x, i)$ where $i$ is the unique number given by the above ordering for the fiber of $\supp(x)$.
In other words, if we have the labels $a_1$, $a_2$ and $b_1$ then we know that $\supp(a_1) = \supp(a_2) \neq \supp(b_1)$ and $a_1 \neq a_2$.
We say that two chambers $C$ and $C'$ are \defn{adjacent} if there exists an element $x \in S$ such that $C$ and $C'$ both cover $x$ in the face poset, \ie $x \fcover C$ and $x \fcover C'$.
In this case, we say that $C$ and $C'$ are \defn{adjacent through $x$}.
The \defn{adjacency graph of $S$, $\Gamma_\mcC$},  is the edge labeled graph where the set of vertices are the chambers $\mcC$ and there is an edge with label $x_i$ between two chambers $C$ and $C'$ whenever $C$ and $C'$ are adjacent through $x_i$.
Note that the adjacency graph is not always connected and that two chambers might have multiple edges between them.

\begin{example}
    We continue our hyperplane arrangement example from \autoref{ex:hyp-arr}.
    We have $6$ chambers and so we will have $6$ vertices, each labeled with the appropriate chamber.
    For the facets, we will associate each facet with our new notation.
    This gives us the following list of labels:
    \begin{align*}
        a_1 &=  F_0 &b_1 &=  F_1 &c_1 &=  F_2\\
        a_2 &=  F_3 &b_2 &=  F_4 &c_2 &=  F_5
    \end{align*}
    
    This gives us the following edge labeled adjacency graph
    \begin{center}
        \begin{tikzpicture}
            [vertex/.style={inner sep=1pt,circle,draw=black,fill=black,thick}]
                \coordinate (A) at (0,0);
                \coordinate (B) at (1,0);
                \coordinate (C) at (1,1);
                \coordinate (D) at (0,1);
                \coordinate (E) at (2,0);
                \coordinate (F) at (2,1);
                \node[vertex] at (A) {};
                \node[vertex] at (B) {};
                \node[vertex] at (C) {};
                \node[vertex] at (D) {};
                \node[vertex] at (E) {};
                \node[vertex] at (F) {};
                \draw (A) -- (E) -- (F) -- (D) -- (A);
                \node[below left] at (A) {$R_0$};
                \node[above] at (B) {$R_1$};
                \node[below] at (C) {$R_4$};
                \node[above left] at (D) {$R_5$};
                \node[below right] at (E) {$R_2$};
                \node[above right] at (F) {$R_3$};

                \node[left] at (0, 0.5) {$c_2$};
                \node[right] at (2, 0.5) {$c_1$};
                \node[above] at (1.5, 1) {$a_2$};
                \node[above] at (0.5, 1) {$b_2$};
                \node[below] at (0.5, 0) {$a_1$};
                \node[below] at (1.5, 0) {$b_1$};
        \end{tikzpicture}
    \end{center}
\end{example}

\subsection{Example - Free left regular bands}
\label{ssec:example-free-left-regular-bands}
A \defn{free left regular band} is a left regular band $S$ on an alphabet $A$ with no additional relations.
An element in $S$ is any repetition-free word over the alphabet $A$ where repetition-free means that no letter appears more than once.
It can be shown that $\mcC_S$ is the set of maximal words over the alphabet $A$ and that they are just permutations of all the letters in $A$.
Each chamber has a unique facet which is the chamber with the final (right-hand) letter chopped off.
As only one letter can be added to each facet, it can be shown that each facet only has one chamber above it in the face poset.
This implies that no two chambers are adjacent to one another and we get an adjacency graph with $\order{A} !$ vertices and no edges.

\begin{example}
    As an example of a free left regular band, let $A = \left\{ a, b, c \right\}$.
    The free left regular band $S$ on $A$ is then all possible words using the letters of $A$ where no two letters repeat.
    As an example, let's understand why no two letters repeat when multiplying two words.
    \[
        ab \cdot ca = abca = a(bc)a = abc
    \]
    The last equality comes from the fact that $xyx = xy$ in a left regular band for any two elements $x$ and $y$ in $S$.

    The face poset for $S$ is given by the following Hasse diagram.
    \begin{center}
        \begin{tikzpicture}
            \node (0) at (0,0) {$\varnothing$};

            \node (a) at (-2, 1) {$a$};
            \node (b) at (0, 1) {$b$};
            \node (c) at (2, 1) {$c$};

            \node (ab) at (-2.5, 2) {$ab$};
            \node (ac) at (-1.5, 2) {$ac$};
            \node (ba) at (-0.5, 2) {$ba$};
            \node (bc) at (0.5, 2) {$bc$};
            \node (ca) at (1.5, 2) {$ca$};
            \node (cb) at (2.5, 2) {$cb$};

            \node (abc) at (-2.5, 3) {$abc$};
            \node (acb) at (-1.5, 3) {$acb$};
            \node (bac) at (-0.5, 3) {$bac$};
            \node (bca) at (0.5, 3) {$bca$};
            \node (cab) at (1.5, 3) {$cab$};
            \node (cba) at (2.5, 3) {$cba$};

            \draw (0.north) -- (a.south);
            \draw (0.north) -- (b.south);
            \draw (0.north) -- (c.south);
            \draw (a.north) -- (ab.south);
            \draw (a.north) -- (ac.south);
            \draw (b.north) -- (ba.south);
            \draw (b.north) -- (bc.south);
            \draw (c.north) -- (ca.south);
            \draw (c.north) -- (cb.south);
            \draw (ab.north) -- (abc.south);
            \draw (ba.north) -- (bac.south);
            \draw (ac.north) -- (acb.south);
            \draw (ca.north) -- (cab.south);
            \draw (bc.north) -- (bca.south);
            \draw (cb.north) -- (cba.south);
        \end{tikzpicture}
    \end{center}
    Notice how the number of edges going upwards from any elements is precisely the number of letters which are missing from the word.
    From here, we can see that the adjacency graph of the set of chambers is just the graph on $6$ vertices with no edges.
\end{example}

\subsection{Face meet-semilattice}
An \defn{MC left regular band}  is a left regular band whose face poset is a meet-semilattice and whose adjacency graph is connected.
We will be working with MC left regular bands for the remainder of this article.
The condition of being connected is not very restrictive (in the graph sense) since unconnected components are just the unions of connected ones.
The restriction to having a meet-semilattice face poset is much more restrictive, but will produce ``nice'' adjacency graphs.
In particular, by forcing the meet-semilattice condition, we can show that there is at most one edge between any two vertices.
\begin{lemma}
    \label{lem:one-edge}
    If $(S, \fleq)$ is a meet-semilattice then there exists at most one edge between any two vertices in the adjacency graph $\Gamma_\mcC$.
\end{lemma}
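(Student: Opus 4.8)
The plan is to argue by contradiction. Suppose that two chambers $C$ and $C'$ with $C \neq C'$ are joined by two distinct edges; by the definition of the adjacency graph this means they are adjacent through two distinct elements $x$ and $y$ with $x \neq y$, so that $x \fcover C$, $x \fcover C'$, $y \fcover C$, and $y \fcover C'$. In particular $x$ and $y$ each lie below both $C$ and $C'$ in the face poset.

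Next I would pass to the meet. Since $(S, \fleq)$ is a meet-semilattice, the meet $C \pmeet C'$ exists and is the greatest lower bound of $\{C, C'\}$. Because $x \fleq C$ and $x \fleq C'$, the element $x$ is a lower bound of $\{C, C'\}$, whence $x \fleq C \pmeet C'$; the identical reasoning gives $y \fleq C \pmeet C'$. I would then observe that $C \pmeet C' \neq C$: otherwise $C \fleq C'$, and since both are chambers (maximal elements), this would force $C = C'$, contradicting our assumption. Thus $C \pmeet C'$ lies strictly below $C$.

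Finally I would exploit the covering relations. We have $x \fleq C \pmeet C' \fleq C$, and the covering $x \fcover C$ means nothing lies strictly between $x$ and $C$; since $C \pmeet C' \neq C$, this forces $C \pmeet C' = x$. Running the same argument with $y$ in place of $x$ yields $C \pmeet C' = y$, and therefore $x = y$, contradicting $x \neq y$. Hence at most one edge joins $C$ and $C'$.

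The argument is short, and I do not anticipate a serious obstacle; the only point requiring care is orienting the order correctly so that the single meet $C \pmeet C'$ is simultaneously trapped inside both covering intervals (from $x$ to $C$ and from $y$ to $C$), which is precisely what collapses the two putative facets to one. The maximality characterisation of chambers is what underlies the step $C \pmeet C' \neq C$, and no further machinery beyond the meet-semilattice hypothesis appears to be needed.
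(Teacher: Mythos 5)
Your proof is correct and follows essentially the same route as the paper: both arguments observe that two distinct edges would give two distinct facets $x \neq y$ lying below both chambers, and then use the covering relations $x \fcover C$, $y \fcover C$ together with the meet-semilattice hypothesis to force the (putative) meet $C \pmeet C'$ to equal both $x$ and $y$, a contradiction. Your version simply spells out in more detail the step the paper compresses into ``$a$ and $b$ are maximal lower bounds, so $C$ and $C'$ have no meet.''
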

\begin{proof}
    Suppose contrarily that there exists two vertices $C$ and $C'$ which have two (or more) edges labeled $a$ and $b$.
    This implies that $a$ and $b$ are distinct facets in $(S, \fleq)$ which are covered by $C$ and $C'$.
    But this implies that both $a$ and $b$ are maximal lower bounds of $C$ and $C'$ implying that $C$ and $C'$ don't have a meet.
    Therefore $(S, \fleq)$ is not a meet-semilattice.
\end{proof}
For now we will stick with meet-semilattice face posets and leave the question of what happens in a more general setting open.

As we are requiring meet-semilattices, one might then ask when is the face poset a meet-semilattice.
It turns out that there is a way to categorize this.
For ease of notation let $S_{\fgeq x}$ denote the set of elements weakly above $x$ in the face poset, \ie $S_{\fgeq x} = \set{y \in S}{y \fgeq x}$.
\begin{theorem}\label{thm:meet-semilattice}
    The face poset $(S, \fleq)$ is a meet-semilattice if and only if for all $x, y \in S$ one of the following three conditions hold:
    \begin{itemize}
        \item $xy = yx$,
        \item $xy \neq yx$ and $S_{\fgeq x} \cap S_{\fgeq y} = \varnothing$, or
        \item $xy \neq yx$ and there exists a unique $z \in S_{\fgeq x} \cap S_{\fgeq y}$ such that 
            \[
                \supp(z) = \bigmeet_{z' \in S_{\fgeq x} \cap S_{\fgeq y}} \supp(z')
            \]
    \end{itemize}
\end{theorem}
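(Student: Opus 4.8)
The plan is to establish the two implications separately. I first record one standing assumption: I take $(S,\fleq)$ to have a least element (equivalently, $S$ is a monoid). This is harmless for the motivating bands and is in fact necessary, since for the two-element left-zero band $\{m_1,m_2\}$ the only non-trivial pair satisfies $m_1m_2=m_1\neq m_2=m_2m_1$ with $S_{\fgeq m_1}\cap S_{\fgeq m_2}=\varnothing$, so the second case holds for every pair, yet $m_1,m_2$ have no common lower bound and hence no meet. With a least element present, any two elements do have common lower bounds, and I will freely write $U=S_{\fgeq x}\cap S_{\fgeq y}$.

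For the direction from meet-semilattice to the trichotomy, fix $x,y$. If $xy=yx$ the first case holds and if $U=\varnothing$ the second holds, so assume $xy\neq yx$ and $U\neq\varnothing$. As $U$ is a nonempty finite subset of a finite meet-semilattice, its meet $j:=\bigmeet U$ exists; since every $w\in U$ lies above both $x$ and $y$, these are lower bounds of $U$, so $x\fleq j$ and $y\fleq j$, whence $j\in U$ and $j=\min U$. Then $\supp(j)\cleq\supp(w)$ for all $w\in U$, giving $\supp(j)=\bigmeet_{w\in U}\supp(w)$. Uniqueness follows from the support criterion of \autoref{ssec:support-map}: if $z'\in U$ with $\supp(z')=\supp(j)$, then $j\fleq z'$ gives $z'=jz'$, while $\supp(z')\cleq\supp(j)$ gives $jz'=j$, so $z'=j$. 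Thus the third case holds.

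For the converse I assume the trichotomy and must show each pair $c,d$ has a unique maximal common lower bound. Suppose instead that $e_1\neq e_2$ are two maximal common lower bounds. By \autoref{lem:fleq} the products $e_1e_2$ and $e_2e_1$ are again common lower bounds, and since $e_1\fleq e_1e_2$ and $e_2\fleq e_2e_1$, maximality forces $e_1e_2=e_1$ and $e_2e_1=e_2$; comparing supports via $\supp(e_1e_2)=\supp(e_1)\join\supp(e_2)$ yields $\supp(e_1)=\supp(e_2)$ and $e_1e_2\neq e_2e_1$. Hence $(e_1,e_2)$ is a non-commuting pair whose common upper bounds include $c$, so $U':=S_{\fgeq e_1}\cap S_{\fgeq e_2}\neq\varnothing$ and the third case supplies a unique minimal-support $z^\ast\in U'$. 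Now $z^\ast\in S_{\fgeq e_1}$ gives $e_1\fleq z^\ast$, and $e_1\neq z^\ast$ because $e_2\not\fleq e_1$ (as $e_2e_1=e_2\neq e_1$) forces $e_1\notin U'$. The argument closes if $z^\ast$ is a common lower bound of $c,d$, for then it sits strictly above $e_1$, contradicting maximality. So everything reduces to the sub-lemma that the unique minimal-support common upper bound $z^\ast$ of $e_1,e_2$ lies below every common upper bound, i.e. that $z^\ast$ is their join.

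This sub-lemma is where I expect the real difficulty. The natural attempt is an induction on $\order{U'}$ (or on the longest chain above $e_1e_2$): if $z^\ast\not\fleq v$ for some $v\in U'$, then $z^\ast v\neq v$, and using $vz^\ast=v$ together with $\supp(z^\ast)\cleq\supp(v)$ one checks $\supp(z^\ast v)=\supp(v)$, $v\,(z^\ast v)=v$ and $(z^\ast v)\,v=z^\ast v$, so that $\{v,z^\ast v\}$ is a two-element left-zero subsemigroup of equal support (exactly the configuration of the facets $F_0,F_3$ in \autoref{ex:hyp-arr}) lying strictly inside the filter, on which the hypothesis can be reapplied. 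The delicate point is that the second case permits a non-commuting pair to have no common upper bound at all, so a careless recursion can stall; arranging the induction to descend to a strictly smaller filter in which the third case genuinely forces a unique minimiser — for instance by taking $v$ minimal among the elements not above $z^\ast$, or by first reducing to chambers — is the crux of the proof.
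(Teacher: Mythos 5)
Your forward direction is correct and in substance the same as the paper's: the meet $j$ of $\Sigma_{x,y}=S_{\fgeq x}\cap S_{\fgeq y}$ is its minimum, and the support criterion ($wz=w$ iff $\supp(z)\cleq\supp(w)$) gives both that $\supp(j)$ realises the meet of the supports and that $j$ is the unique element doing so. Your preliminary observation is also a genuine catch: the two-element left-zero band satisfies the trichotomy while its face poset is a two-element antichain, so the ``if'' direction really does need a least element (the paper's converse tacitly uses one when it passes from ``every pair has a join in $\hat{P}$'' to ``$\hat{P}$ is a lattice'').

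The backward direction, however, is not a proof. You correctly reduce it to the sub-lemma that the unique minimal-support element $z^\ast$ of $U'=S_{\fgeq e_1}\cap S_{\fgeq e_2}$ lies below every element of $U'$, and then you leave that sub-lemma open, yourself noting that the sketched recursion ``can stall'' when an intermediate non-commuting pair $\{v, z^\ast v\}$ falls into the second case of the trichotomy. That step is exactly where the content of the converse lives, so as written the ``if'' direction is unproven. For comparison: the paper organises the converse differently (it shows each pair $x,y$ directly has a join in $\hat{P}$, rather than playing two maximal lower bounds of $c,d$ against each other), but it hits the identical obstacle at the identical spot and disposes of it in one line --- $z$ having the globally minimal support makes $z$ minimal in $\Sigma_{x,y}$, any second minimal element $z''$ is asserted to force $\supp(z'')=\supp(z)$ and hence $z''=z$ by uniqueness, and in a finite poset a unique minimal element of a subset is its minimum. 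Your instinct that this assertion is the delicate point is not unreasonable (the claim that a second minimal element must share the support of $z$ is precisely what your stalled induction is trying to establish), but flagging a step as the crux is not the same as proving it: you need to supply an argument for the sub-lemma, or adopt the paper's minimal-element argument and justify it, before the converse is complete.
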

\begin{proof}
    To ease notation we let $\Sigma_{x,y} = S_{\fgeq x} \cap S_{\fgeq y}$.

    \textbf{$\b{\Rightarrow}$}: Suppose that $P = (S, \fleq)$ is a meet-semilattice.
    Let $\hat{P} = (S \cup \left\{ \hat{1} \right\}, \fpleq)$ where $x \fpleq \hat{1}$ for all $x \in S$ and $x \fpleq y$ if and only if $x \fleq y$ for all $x, y \in S$, \ie, $\hat{P}$ is the lattice obtained from $P$ by adjoining a maximal element.
    For $x, y \in S$ we either have that $x \fpjoin y = \hat{1}$ or not.
    
    Suppose first that $x \fpjoin y = \hat{1}$.
    Then $\Sigma_{x,y} = \varnothing$ by definition of join.
    We show that $xy \neq yx$ by contradiction.
    Suppose contrarily that $xy = yx$.
    Recall that by \autoref{lem:fleq} we have $x \fleq xy$ and $y \fleq yx = xy$.
    This implies that either $\Sigma_{x,y}$ is not empty (a contradiction) or that $x = xy = yx = y$ implying that $x = y = \hat{1}$, a contradiction as $\hat{1} \notin S$ by construction.
    Therefore $xy \neq yx$ and $\Sigma_{x,y} = \varnothing$ giving us the second case in the theorem.

    Suppose next that $x \fpjoin y = z \neq \hat{1}$, \ie $z \in S$; then either $xy = z$ or $xy \neq z$.
    If $xy = z$ then since $\supp(xy) = \supp(yx)$ we know that $xy$ and $yx$ have the same rank in $(S, \fleq)$ implying that $yx = z = xy$, putting us in the first case in the theorem.
    If $xy \neq z$, then, since $z$ is the join, $y \not\fleq xy$.
    This implies $yx = yxy \neq xy = xyx$ using both the left regular band property and the fact that $y \not\fleq xy$.
    Therefore $xy \neq yx$ and $x \not\fleq yx$.
    Since $\hat{P}$ is a lattice, then the interval $[z, \hat{1}]$ is also a lattice.
    But then $z$ is the unique minimal element in $\Sigma_{x,y}$ such that $\supp(z) \subseteq \supp(z')$ for all $z' \in \Sigma_{x,y}$.
    In other words, $z$ is the unique minimal element in $\Sigma_{x,y}$ such that $\supp(z) = \bigmeet_{z' \in \Sigma_{x,y}} \supp(z')$, putting us in the third case in the theorem and finishing the first direction.

    \textbf{$\b{\Leftarrow}$}: Suppose that $P = (S, \fleq)$ and $\hat{P}$ is defined as above.
    We aim to show that $\hat{P}$ is a lattice, which will then imply that $P$ is a meet-semilattice.
    Since our poset is finite, it suffices to show that every two elements have a join.
    We do this using our three cases as above.

    Suppose first that $xy = yx$.
    Then, by \autoref{lem:fleq} we have $x \fpleq xy$ and $y \fpleq yx = xy$.
    Suppose that there exists $z \neq xy$ such that $x \fleq z$ and $y \fleq z$.
    (We can assume $z \neq \hat{1}$ since $xy \fpleq \hat{1}$ by definition.)
    By \autoref{lem:fleq} again, we have that $xy \fleq z$.
    This implies that $x \fpjoin y = xy$.

    Suppose next that $xy \neq yx$ and that $\Sigma_{x,y} = \varnothing$.
    This implies that for all $z \in S$ either $x \not\fleq z$ or $y \not\fleq z$.
    Therefore $\hat{1}$ is the only element in $S \cup \left\{ \hat{1} \right\}$ weakly above $x$ and $y$.
    Therefore $x \fpjoin y = \hat{1}$.

    Suppose finally that $xy \neq yx$ and that there is a unique $z$ such that $\supp(z) = \bigmeet_{z' \in \Sigma_{x,y}} \supp(z')$.
    This second condition implies that for all $z' \in \Sigma_{x,y}$ that $\supp(z) \subseteq \supp(z')$.
    Since this $z$ is unique, this implies $z$ is also the unique minimal element in $\Sigma_{x,y}$.
    If not, there exists a second minimal element $z''$ in $\Sigma_{x,y}$, implying $\supp(z) = \supp(z'')$, contradicting the uniqueness of $z$.
    As $z$ is the unique minimal element in $\Sigma_{x,y}$, we have $x \fpjoin y = z$.

    Since every two elements have a join and our poset is finite, therefore $\hat{P}$ is a lattice, implying that $P$ is a meet-semilattice (since removing the top element doesn't affect meets) as desired.
\end{proof}
Although this proof and theorem gives a nice result, it would be interesting to see if there is some simpler condition on a left regular band which is equivalent to its face poset being a meet-semilattice.

\subsection{Properties of the adjacency graph}
To understand this poset more, we explore the adjacency graph first.
It turns out that the adjacency graph is, in general, a very-well behaved graph.
To explain what we mean by a ``very-well behaved graph'' we will first give some preliminary lemmas before giving one of our main theorems.

Before describing our lemmas, we study minimal cycles in the adjacency graph.
Given an arbitrary MC left regular band and two distinct arbitrary edges in the adjacency graph, it is not necessarily the case that there exists a minimal (simple) cycle containing these two edges.
This is shown in the following example:
\begin{example}
    \label{ex:non-cycle}
    Let the following be a the face poset of the MC left regular band which has $n$ chambers, $n-1$ facets and $1$ identity element.
    \begin{center}
        \begin{tikzpicture}
                \node (0) at (0,0) {$0$};
                \node (F0) at (-2, 1) {$F_1$};
                \node (F1) at (-1, 1) {$F_2$};
                \node (F2) at (0, 1) {$\cdots$};
                \node (F3) at (1, 1) {$F_{n-2}$};
                \node (F4) at (2, 1) {$F_{n-1}$};
                \node (R0) at (-2.5, 2) {$C_1$};
                \node (R1) at (-1.5, 2) {$C_2$};
                \node (R15) at (-0.5, 2) {};
                \node (R2) at (0, 2) {$\cdots$};
                \node (R25) at (0.5, 2) {};
                \node (R3) at (1.5, 2) {$C_{n-1}$};
                \node (R4) at (2.5, 2) {$C_n$};
                \draw (0.north) -- (F0.south);
                \draw (0.north) -- (F1.south);
                \draw (0.north) -- (F2.south);
                \draw (0.north) -- (F3.south);
                \draw (0.north) -- (F4.south);
                \draw (F0.north) -- (R0.south);
                \draw (F0.north) -- (R1.south);
                \draw (F1.north) -- (R1.south);
                \draw (F1.north) -- (R15.south);
                \draw (F3.north) -- (R25.south);
                \draw (F3.north) -- (R3.south);
                \draw (F4.north) -- (R3.south);
                \draw (F4.north) -- (R4.south);
        \end{tikzpicture}
    \end{center}
    This is a left regular band under the conditions that $R_i X = R_i$, $0 X = X$, $F_i0 = F_i$, and 
    \[
        F_iR_j = \begin{cases}
            R_i& \text{if }j \leq i\\
            R_{i+1} & \text{if } j > i
        \end{cases}
        \qquad
        F_iF_j = \begin{cases}
            R_i& \text{if }j < i\\
            F_i& \text{if }j = i\\
            R_{i + 1}& \text{if } j > i
        \end{cases}
    \]
    The adjacency graph for this left regular band is given by the following graph:
    \begin{center}
        \begin{tikzpicture}
            [vertex/.style={inner sep=1pt,circle,draw=black,fill=black,thick}]
            \coordinate (0) at (0, 0);
            \coordinate (1) at (1, 0);
            \coordinate (2) at (2, 0);
            \coordinate (3) at (3, 0);
            \coordinate (4) at (4, 0);
            \coordinate (5) at (5, 0);
            \node[vertex] at (0) {};
            \node[vertex] at (1) {};
            \node[vertex] at (2) {};
            \node[vertex] at (3) {};
            \node[vertex] at (4) {};
            \node[vertex] at (5) {};
            \draw (0) -- (2);
            \draw (3) -- (5);
            \draw[dotted] (2) -- (3);
        \end{tikzpicture}
    \end{center}
\end{example}

Our first lemma describes what types of minimal cycles exist in the adjacency graph.
\begin{lemma}
    \label{lem:mincyc}
    Let $S$ be a finite MC left regular band with adjacency graph $\Gamma_\mcC$.
    Given two (distinct) edges in $\Gamma_\mcC$ labeled $a$ and $b$, let $\Gamma_{a,b}$ be a minimal cycle in $\Gamma_\mcC$ which contains both edges.
    If $\Gamma_{a, b}$ exists, then
    \begin{itemize}
        \item either no two edges in $\Gamma_{a,b}$ have the same label, or
        \item $\Gamma_{a,b}$ is isomorphic to one of the following two cycles:
            \begin{center}    
                \begin{tikzpicture}
                    [vertex/.style={inner sep=1pt,circle,draw=black,fill=black,thick}]
                    \begin{scope}[shift={(0,0)}]
                        \coordinate (A) at (0,0);
                        \coordinate (B) at (2,0);
                        \coordinate (C) at (1,1);
                        \node[vertex] at (A) {};
                        \node[vertex] at (B) {};
                        \node[vertex] at (C) {};
                        \draw (A) -- (B);
                        \draw (C) -- (B);
                        \draw (A) -- (C);
                        \node[above left] at (0.5, 0.5) {$a$};
                        \node[above right] at (1.5, 0.5) {$a$};
                        \node[below] at (1, 0) {$a$};
                    \end{scope}    
                    \begin{scope}[shift={(4,0)}]
                        \coordinate (A) at (0,0);
                        \coordinate (B) at (1,0);
                        \coordinate (C) at (1,1);
                        \coordinate (D) at (0,1);
                        \node[vertex] at (A) {};
                        \node[vertex] at (B) {};
                        \node[vertex] at (C) {};
                        \node[vertex] at (D) {};
                        \draw (A) -- (B);
                        \draw (C) -- (B);
                        \draw (C) -- (D);
                        \draw (A) -- (D);
                        \node[left] at (0, 0.5) {$a$};
                        \node[right] at (1, 0.5) {$a$};
                        \node[above] at (0.5, 1) {$a$};
                        \node[below] at (0.5, 0) {$a$};
                    \end{scope}
                \end{tikzpicture}
            \end{center}
    \end{itemize}
\end{lemma}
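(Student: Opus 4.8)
The plan is to show that a repeated label can only arise when the whole cycle collapses into a single family of chambers sharing one facet, which then forces the triangle or the square. First I would unpack what a repeated label means: since an edge is labelled by the facet it is adjacent through, two edges of $\Gamma_{a,b}$ carry the same label exactly when a single facet $c$ is covered by at least three chambers. Let $W = \set{C \in \mcC}{c \fcover C}$ be the set of \emph{all} chambers covering $c$. Any two elements of $W$ are adjacent through $c$, and since $(S,\fleq)$ is a meet-semilattice, \autoref{lem:one-edge} says this is the only edge between them; hence $W$ induces a clique in $\Gamma_\mcC$ all of whose edges are labelled $c$. The goal then becomes: a minimal cycle through $a$ and $b$ that meets this clique in two edges must lie entirely inside $W$. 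Once that is known, it is a minimal simple cycle in a complete graph through two prescribed edges, hence a triangle (when the two edges share a vertex) or a $4$-cycle (when they are disjoint), with every edge labelled $c$ --- exactly the two pictured cases. The first bullet is the complementary situation, where no facet is covered three times along the cycle.

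\textbf{The projection identity.} The engine I would use is the map $\pi\colon \mcC \to W$, $\pi(D) = cD$, which is well defined because $c \fleq cD$ by \autoref{lem:fleq} and $\supp(cD)$ is maximal, so $cD$ is a chamber covering $c$. The crucial identity is that $\pi(D) = u$ whenever $D$ is adjacent to some $u \in W$ through a facet $f$ with $\supp(f) \neq \supp(c)$. Indeed $c \fcover u$ and $f \fcover u$ with distinct supports give $cf = u$ by \autoref{lem:mult-support}, while $f \fcover D$ gives $fD = D$; associativity then yields $cD = c(fD) = (cf)D = uD = u$, the last step by \autoref{lem:equiv-chamber}. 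In particular no chamber outside $W$ can be adjacent to two distinct members of $W$ through such facets, so $W$ is attached to the rest of the graph only through well-separated ``gates.''

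\textbf{Collapsing the cycle, and the main obstacle.} Next I would cut the cycle into maximal runs of consecutive vertices lying in $W$ together with the excursions between them. Because $W$ is a clique, any excursion or sub-run whose edges avoid both $a$ and $b$ can be replaced by a single chord of $W$, contradicting minimality; and the projection identity rules out the shortest excursions, since a single outside vertex adjacent to two gates would force those gates to coincide. Iterating these two reductions should drive every vertex of the cycle into $W$. I expect this collapsing step to be the main obstacle. The delicate point is the case where exactly one of $a, b$ is a $c$-edge: then a single long excursion carrying $b$ cannot be shortcut directly, and one must combine the projection identity with a support computation along the excursion --- tracking how $\supp$ moves and using that the product of the entering facet with $c$ is forced to lie below the meet of the two endpoints, which is $c$ itself --- to manufacture the missing chord. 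The remaining wrinkle is the degenerate possibility $\supp(f) = \supp(c)$ for an entering facet $f$; I would treat this separately by observing that such an $f$ lands in a parallel clique of the same support and re-running the attachment argument there.

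\textbf{Conclusion.} Once the cycle is known to lie inside $W$, the classification is immediate: in the complete graph on $W$ a shortest simple cycle through the two given edges uses three vertices if the edges meet and four if they are disjoint, and every edge of $W$ is labelled $c$. This produces precisely the all-$a$ triangle and the all-$a$ square, completing the proof.
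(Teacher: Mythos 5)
Your central observation --- that the chambers covering the repeated facet $c$ form a clique in $\Gamma_\mcC$ all of whose edges are labelled $c$ (via \autoref{lem:one-edge}), and that minimality of $\Gamma_{a,b}$ should then collapse the cycle into this clique, leaving only the triangle or the square --- is exactly the engine of the paper's proof, and your endgame matches its conclusion. Your projection identity $D \mapsto cD$ is correct and is a nice extra tool (it essentially anticipates \autoref{lem:diff-support} and \autoref{cor:diff-support}, which the paper proves later for other purposes), but the paper does not need it for this lemma.

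The genuine gap is the one you flag yourself: the collapsing step is never actually carried out. The case where one of the prescribed edges $a$, $b$ lies on an excursion outside the clique is left at the level of ``combine the projection identity with a support computation \ldots to manufacture the missing chord,'' and the degenerate case $\supp(f) = \supp(c)$ for an entering facet is deferred to ``re-running the attachment argument'' in a parallel clique; neither is an argument, so as written the proposal establishes the setup but not the statement. The paper avoids the runs-and-excursions machinery entirely by working with just the two same-labelled edges: writing them as $WX$ and $YZ$, all of $W$, $X$, $Y$, $Z$ cover $c$ and hence are pairwise joined by $c$-labelled edges, and the cycle decomposes into these two edges plus exactly two arcs (one joining $W$ to $Y$ and one joining $X$ to $Z$, after a short case analysis for coincidences among the endpoints). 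Each arc has a $c$-chord available, so minimality forces it to have length one, and \autoref{lem:one-edge} forces that single edge to be the chord itself; this yields the all-$c$ triangle or square at once and shows $a = b = c$. If you want to keep your route, you must actually prove that no excursion can carry $a$ or $b$; otherwise the simpler two-arc decomposition is the way to close the argument.
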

\begin{proof}
    Suppose that there exist two distinct edges in our minimal cycle with the same label and suppose, without loss of generality, that this edge is labeled with $c$.
    Then we have the following
    \begin{center}
        \begin{tikzpicture}
            [vertex/.style={inner sep=1pt,circle,draw=black,fill=black,thick}]
            \node at (-2,0) {$\Gamma_{a,b}: $};
            \node[vertex] (W) at (0,0.5) {};
            \node[vertex] (X) at (0,-0.5) {};
            \node[vertex] (Y) at (1,0.5) {};
            \node[vertex] (Z) at (1,-0.5) {};
            \draw (W) -- (X);
            \draw (Y) -- (Z);
            \draw[dashed] (X) to[out=-90, in=-90] (Z);
            \draw[dashed] (W) to[out=90, in=90] (Y);
            \node[left] at (0,0) {$c$};
            \node[right] at (1,0) {$c$};
            \node at (0.5, 1.1) {$d$};
            \node at (0.5, -1.1) {$e$};
            \node[above left] at (W) {$W$};
            \node[above right] at (Y) {$Y$};
            \node[below left] at (X) {$X$};
            \node[below right] at (Z) {$Z$};
        \end{tikzpicture}
    \end{center}
    where $d$ and $e$ are (potentially empty) paths from $W$ to $Y$ and from $X$ to $Z$ respectively.
    Note that $c$ can either be $a$, $b$ or neither.
    Recall that by definition $X, Y, Z, W \in \mcC$ and that $c$ is a facet in $(S, \fleq)$.

    If $W = Y$ and $X=Z$ or if $W = Z$ and $X =Y$, then, as our edges are distinct, this implies there are two edges between the two vertices, contradicting meet-semilattice.
    Therefore, we have $W \neq Y$ or $X \neq Z$ and we have $W \neq Z$ or $X \neq Y$, \ie there is at least one vertex out of $W, X, Y , Z$ that is not equal to either of the two vertices on the other edge.
    Without loss of generality, we may assume that $W \neq Y$ and that $W \neq Z$.
    We make no such restriction on $X$, \ie $X$ could equal either $Y$ or $Z$.
    By definition, $c \fcover W$, $c \fcover Y$ and $c \fcover Z$ since there is an edge connected to $W$, $Y$ and $Z$ with label $c$.
    But since $c$ is covered by all three, that means we should have an edge labeled $c$ between $W$ and $Y$ in addition to between $W$ and $Z$.
    In other words, our cycle looks like:
    \begin{center}
        \begin{tikzpicture}
            [vertex/.style={inner sep=1pt,circle,draw=black,fill=black,thick}]
            \node at (-2,0) {$\Gamma_{a,b}: $};
            \node[vertex] (W) at (0,0.5) {};
            \node[vertex] (X) at (0,-0.5) {};
            \node[vertex] (Y) at (1,0.5) {};
            \node[vertex] (Z) at (1,-0.5) {};
            \draw (W) -- (X);
            \draw (Y) -- (Z);
            \draw (W) -- (Y);
            \draw (W) -- (Z);
            \draw[dashed] (X) to[out=-90, in=-90] (Z);
            \draw[dashed] (W) to[out=90, in=90] (Y);
            \node[left] at (0,0) {$c$};
            \node[right] at (1,0) {$c$};
            \node[above] at (0.5,-0.5) {$c$};
            \node[below] at (0.5,0.5) {$c$};
            \node at (0.5, 1.1) {$d$};
            \node at (0.5, -1.1) {$e$};
            \node[above left] at (W) {$W$};
            \node[above right] at (Y) {$Y$};
            \node[below left] at (X) {$X$};
            \node[below right] at (Z) {$Z$};
        \end{tikzpicture}
    \end{center}
    Since $W \neq Y$, $d$ is non-empty.
    Furthermore, by minimality of our cycle, $d$ must be of length one.
    Finally, by meet-semilattice, no two vertices can have more than one edge between them, implying that $d$ is nothing more than the edge labeled $c$ and therefore we can remove it from our diagram.

    We now turn to $X$ and continue down a similar path.
    We have three cases.

    \begin{enumerate}
        \item If $X = Z$ then
    \begin{center}
        \begin{tikzpicture}
            [vertex/.style={inner sep=1pt,circle,draw=black,fill=black,thick}]
            \node at (-1,0) {$\Gamma_{a,b}: $};
            \node[vertex] (W) at (0,0.5) {};
            \node[vertex] (Y) at (1,0.5) {};
            \node[vertex] (Z) at (1,-0.5) {};
            \draw (Y) -- (Z);
            \draw (W) -- (Y);
            \draw (W) -- (Z);
            \draw[dashed] (Z) to[out=-90, in=180, loop] ();
            \node[right] at (1,0) {$c$};
            \node[above] at (0.5,-0.5) {$c$};
            \node[above] at (0.5,0.5) {$c$};
            \node at (1, -1.1) {$e$};
            \node[above left] at (W) {$W$};
            \node[above right] at (Y) {$Y$};
            \node[below right] at (Z) {$Z = X$};
        \end{tikzpicture}
    \end{center}
        \item If $X = Y$ then 
    \begin{center}
        \begin{tikzpicture}
            [vertex/.style={inner sep=1pt,circle,draw=black,fill=black,thick}]
            \node at (-1,0) {$\Gamma_{a,b}: $};
            \node[vertex] (W) at (0,0.5) {};
            \node[vertex] (Y) at (1,0.5) {};
            \node[vertex] (Z) at (1,-0.5) {};
            \draw (Y) -- (Z);
            \draw (W) -- (Y);
            \draw (W) -- (Z);
            \draw[dashed] (Y) to[out=0, in=0] (Z);
            \node[left] at (1,0) {$c$};
            \node[above] at (0.5,-0.5) {$c$};
            \node[above] at (0.5,0.5) {$c$};
            \node at (1.6, 0) {$e$};
            \node[above left] at (W) {$W$};
            \node[above right] at (Y) {$Y = X$};
            \node[below right] at (Z) {$Z$};
        \end{tikzpicture}
    \end{center}
        \item If $X \neq Y$ and $X \neq Z$, then by a similar argument as before, there exists an edge labeled with $c$ between $X$ and $Y$ and also between $X$ and $Z$.
    \begin{center}
        \begin{tikzpicture}
            [vertex/.style={inner sep=1pt,circle,draw=black,fill=black,thick}]
            \node at (-1,0) {$\Gamma_{a,b}: $};
            \node[vertex] (W) at (0,0.5) {};
            \node[vertex] (X) at (0,-0.5) {};
            \node[vertex] (Y) at (1,0.5) {};
            \node[vertex] (Z) at (1,-0.5) {};
            \draw (W) -- (X);
            \draw (Y) -- (Z);
            \draw (W) -- (Y);
            \draw (W) -- (Z);
            \draw (X) -- (Y);
            \draw (X) -- (Z);
            \draw[dashed] (X) to[out=-90, in=-90] (Z);
            \node[left] at (0,0) {$c$};
            \node[right] at (1,0) {$c$};
            \node[right] at (0,0) {$c$};
            \node[left] at (1,0) {$c$};
            \node[below] at (0.5,-0.5) {$c$};
            \node[above] at (0.5,0.5) {$c$};
            \node at (0.5, -1.1) {$e$};
            \node[above left] at (W) {$W$};
            \node[above right] at (Y) {$Y$};
            \node[below left] at (X) {$X$};
            \node[below right] at (Z) {$Z$};
        \end{tikzpicture}
    \end{center}
    \end{enumerate}
    In all three cases, by the minimality of $\Gamma_{a,b}$ and by the meet-semilattice condition, the edge $e$ isn't needed in our diagrams.
    In particular, this implies that $a = b = c$ and that our minimal cycle is one of the following two cycles:
    \begin{center}
        \begin{tikzpicture}
            [vertex/.style={inner sep=1pt,circle,draw=black,fill=black,thick}]
            \begin{scope}[shift={(0,0)}]
                \node[vertex] (W) at (0,0.5) {};
                \node[vertex] (Y) at (1,0.5) {};
                \node[vertex] (Z) at (1,-0.5) {};
                \draw (Y) -- (Z);
                \draw (W) -- (Y);
                \draw (W) -- (Z);
                \node[right] at (1,0) {$c$};
                \node[above] at (0.5,-0.5) {$c$};
                \node[above] at (0.5,0.5) {$c$};
            \end{scope}
            \begin{scope}[shift={(5,0)}]
                \node[vertex] (W) at (0,0.5) {};
                \node[vertex] (X) at (0,-0.5) {};
                \node[vertex] (Y) at (1,0.5) {};
                \node[vertex] (Z) at (1,-0.5) {};
                \draw (W) -- (X);
                \draw (Y) -- (Z);
                \draw (W) -- (Y);
                \draw (X) -- (Z);
                \node[left] at (0,0) {$c$};
                \node[right] at (1,0) {$c$};
                \node[below] at (0.5,-0.5) {$c$};
                \node[above] at (0.5,0.5) {$c$};
            \end{scope}
        \end{tikzpicture}
    \end{center}
\end{proof}

Inside the proof of the previous lemma, we notice that even in the square cycle case, we actually have the complete graph.
This provides us with a nice corollary to know when a label appears exactly once in our graph.
\begin{corollary}
    \label{cor:no-cycle-implies-exactly-once}
    Let $S$ be a finite MC left regular band with adjacency graph $\Gamma_\mcC$.
    Let $a$ be a (fixed) label of some edge.
    If there are no cycles of length $3$ which contain an edge with label $a$, then the label $a$ appears exactly once in $\Gamma_\mcC$.    
\end{corollary}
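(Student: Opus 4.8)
The plan is to prove the contrapositive: assuming the label $a$ occurs on at least two distinct edges of $\Gamma_\mcC$, I would produce a simple $3$-cycle all of whose edges carry the label $a$, contradicting the hypothesis. Since $a$ is by assumption the label of some edge, it occurs at least once, so ruling out two or more occurrences yields exactly one. The engine of the argument is the following clique observation, which comes straight from the definition of adjacency: if $a \fcover C$ and $a \fcover C'$ for two chambers $C \neq C'$, then $C$ and $C'$ are adjacent through $a$, so $\{C, C'\}$ is an edge labeled $a$. Hence whenever a facet $a$ is covered by a set $D$ of chambers, every pair of elements of $D$ is joined by an edge labeled $a$; that is, the chambers over $a$ span a complete subgraph of $\Gamma_\mcC$ with all edges labeled $a$.

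First I would check that two distinct edges labeled $a$ force the facet $a$ to be covered by at least three distinct chambers. Writing the two edges as unordered pairs $\{A, B\}$ and $\{C, D\}$, the fact that both bear the label $a$ means all four chambers cover the facet $a$ in $(S, \fleq)$. By \autoref{lem:one-edge} there is at most one edge between any two vertices, so two \emph{distinct} edges correspond to two \emph{distinct} $2$-element pairs; their union therefore contains at least three distinct chambers (if it contained only two, the two pairs would coincide). Thus at least three distinct chambers cover $a$. Feeding any three of them, say $C, C', C''$, into the clique observation yields the three edges $\{C, C'\}$, $\{C, C''\}$, $\{C', C''\}$, each labeled $a$, which together form a simple $3$-cycle containing an edge labeled $a$ — exactly what the hypothesis forbids.

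I do not expect a genuine obstacle here; the only point needing care is the counting step that upgrades ``two distinct $a$-edges'' to ``at least three chambers over $a$,'' together with the appeal to \autoref{lem:one-edge} to exclude a doubled edge between the same pair. This route also squares with the remark preceding the statement: the repeated-label minimal cycles permitted by \autoref{lem:mincyc} are in fact complete graphs ($K_3$ or $K_4$), and every complete graph on three or more vertices contains a triangle, so forbidding triangles on $a$ forbids any repetition of the label $a$ at all.
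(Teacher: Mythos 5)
Your argument is correct and is essentially the paper's own proof: two distinct $a$-labeled edges force at least three chambers covering the facet $a$, and any three such chambers are pairwise adjacent through $a$, producing the forbidden triangle. The extra care you take with \autoref{lem:one-edge} to rule out a doubled edge is a detail the paper leaves implicit, but the route is the same.
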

\begin{proof}
    Suppose there are two edges with the same label $a$.
    Then there are at least $3$ chambers above $a$ in the face poset.
    These three chambers are therefore connected in the adjacency graph and form a cycle of length $3$ containing $a$ as an edge label.
\end{proof}

We continue down this road and look deeper into what happens when we take two arbitrary edges with different labels.
First, we describe when their supports are different.
\begin{lemma}
    \label{lem:diff-support}
    Let $S$ be a finite MC left regular band with adjacency graph $\Gamma_\mcC$.
    Let $a$ and $b$ be distinct labels of two edges.
    Suppose further that $\supp(a) \neq \supp(b)$.
    Then $ab = aX$ for all chambers $X$ weakly above $b$.
\end{lemma}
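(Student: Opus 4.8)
The plan is to reduce everything to the single claim that the product $ab$ is a chamber; granting that, the asserted equality is a one-line absorption computation. Fix a chamber $X$ with $b \fleq X$. By definition of the face order this means $X = bX$, so
\[
    aX = a(bX) = (ab)X.
\]
If $ab$ is a chamber, then \autoref{lem:equiv-chamber} gives $(ab)X = ab$, whence $aX = ab$ for every such $X$, which is exactly the statement. This reduction is sharp: since $\supp(X)$ is the top of the support lattice, $(ab)X = ab$ holds iff $\supp(ab)$ is the top, i.e. iff $ab$ is a chamber, so the lemma is \emph{equivalent} to ``$ab$ is a chamber'' and there is no cheaper route.

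Thus the real content is to prove $\supp(ab) = \hat{1}$, the maximal element of $(L, \cleq)$. Since $\supp(ab) = \supp(a) \join \supp(b)$, this amounts to $\supp(a) \join \supp(b) = \hat{1}$. The clean way to obtain this is to show that $\supp(a)$ and $\supp(b)$ are \emph{coatoms} of $L$: two distinct coatoms of a finite lattice are incomparable and so join to the top. So the key sub-claim is: \textbf{if a facet lies below two distinct chambers (i.e. it labels an edge of $\Gamma_\mcC$), then its support is a coatom.}

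I expect this coatomicity to be the main obstacle, and it is exactly where connectivity (the ``C'') enters. The meet-semilattice hypothesis alone is insufficient: there is a five-element meet-semilattice left regular band $\{e, z, C_1, C_2, D\}$ in which $e$ lies below the two chambers $C_1, C_2$ yet $\supp(e)$ is not a coatom (an element $z$ sits strictly between $\supp(e)$ and $\hat{1}$), the catch being that its adjacency graph is disconnected. To establish the sub-claim I would pass to the upper set $S_{\fgeq a} = aS$, which is itself a left regular band with identity $a$, has support lattice $[\supp(a), \hat{1}]$, and whose chambers are precisely the chambers of $S$ above $a$. A strictly intermediate flat would be witnessed by some $z = aw \fgeq a$ that is $\fleq$ no chamber covering $a$ (otherwise $a \fcover C$ would be violated); I would then use connectivity of $\Gamma_\mcC$, controlled by \autoref{lem:mult-support}, to argue that such a $z$ separates the chambers above $a$ into their own component of $\Gamma_\mcC$, contradicting ``C''.

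Once the sub-claim is in hand the proof closes at once: $\supp(a) \neq \supp(b)$ are distinct coatoms, hence $\supp(a) \join \supp(b) = \hat{1}$, so $ab$ is a chamber, and the computation $aX = (ab)X = ab$ completes the argument.
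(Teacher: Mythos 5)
Your reduction is exactly right, and it is the same absorption computation the paper uses: once $ab \in \mcC$ is known, \autoref{lem:equiv-chamber} gives $ab = (ab)X = a(bX) = aX$ for every chamber $X \fgeq b$, and conversely the statement forces $ab$ to be a chamber, so the lemma is indeed equivalent to ``$ab \in \mcC$''. You have also correctly located where all the content sits: the paper's own proof simply asserts ``Since $\supp(a) \neq \supp(b)$ this implies $ab \in \mcC$'' with no argument, whereas the only result available at that point, \autoref{lem:mult-support}, establishes this implication only when $a$ and $b$ are covered by a \emph{common} chamber $C$ --- it is the containment $xy \fleq C$ together with $x \fcover C$ that traps $xy$ in the two-element interval $[x,C]$, and no such trap is available here. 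Your five-element disconnected example showing that the implication is not a purely local fact is a genuine and relevant observation.

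The gap is that your proof of the key sub-claim is a plan rather than an argument. You assert that every facet labeling an edge has coatom support and defer the justification to a separation argument (``I would then use connectivity \dots to argue that such a $z$ separates the chambers above $a$''), but that separation is precisely the hard part and it is not routine. If $a \fcover C$ and $z$ lies strictly between $a$ and some chamber $D$, then the chambers covering $a$ and the chambers above $z$ are indeed disjoint sets, but a path in $\Gamma_\mcC$ joining them need not use an edge whose label is comparable to $a$ or to $z$: it can pass through facets $f$ with $\supp(f) \join \supp(a) = \hat{1}$, for which $af$ is a chamber and \autoref{lem:mult-support} provides no obstruction whatsoever. To close the argument one must show that for every edge of $\Gamma_\mcC$ with endpoints $X$, $Y$ and common facet $f$, the chambers $aX$ and $aY$ land on the same side of the partition of $\mcC \cap S_{\fgeq a}$ into chambers covering $a$ and chambers lying above an intermediate element; this requires a case analysis on how $\supp(f)$ sits relative to $\supp(a)$ (using $aX = (af)X = af = (af)Y = aY$ when $af$ is a chamber, the bijection of \autoref{lem:same-support-bij} when $\supp(f) = \supp(a)$, and a separate argument when $\supp(f)$ is strictly below $\supp(a)$), none of which appears in your proposal. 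Until that is supplied the sub-claim, and hence the proof, is incomplete. To be fair, the paper is no more explicit at this point; but a proof written to stand on its own would need to carry this step out.
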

\begin{proof}
    Let $X \in \mcC \cap S_{\fgeq b}$ be a chamber weakly above $b$.
    Since $\supp(a) \neq \supp(b)$ this implies $ab \in \mcC$.
    By \autoref{lem:equiv-chamber} we have $ab = (ab)X$.
    By associativity and definition of the face poset we then have $(ab)X = a(bX) = aX$.
    In other words $ab = aX$ for any arbitrary chamber weakly above $b$.
\end{proof}

If $a$ and $b$ share a vertex in common, then we get something a little stronger.
\begin{corollary}
    \label{cor:diff-support}
    Let $S$ be a finite MC left regular band with adjacency graph $\Gamma_\mcC$.
    Let $a$ and $b$ be labels of two edges with vertices $X,\,Y$ and $Y,\, W$ respectively.
    Suppose further that $\supp(a) \neq \supp(b)$.
    Then $ab = ba (= aW = aY = bY = bX = Y)$.
\end{corollary}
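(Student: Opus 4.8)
The plan is to reduce everything to \autoref{lem:diff-support}, applied in both orderings of the pair, together with the defining relation of the face poset at the shared chamber $Y$. The whole point is that $Y$ sits above \emph{both} facets $a$ and $b$, which is what forces the two products $ab$ and $ba$ to collapse to the single value $Y$.

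First I would record the cover data coming from the edges. Since $a$ labels the edge with vertices $X$ and $Y$, we have $a \fcover X$ and $a \fcover Y$, so both $X$ and $Y$ are chambers weakly above $a$; likewise $b \fcover Y$ and $b \fcover W$ make $Y$ and $W$ chambers weakly above $b$. Note that the hypothesis $\supp(a) \neq \supp(b)$ is symmetric in $a$ and $b$, so \autoref{lem:diff-support} is available with either facet in the first position.

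Next I would apply \autoref{lem:diff-support} to $a$ and $b$, using the chambers $Y$ and $W$ that lie above $b$, to obtain $ab = aY$ and $ab = aW$. Swapping the roles of $a$ and $b$ and using the chambers $X$ and $Y$ that lie above $a$ gives $ba = bX$ and $ba = bY$. Finally I would invoke the definition of the face poset at the common vertex: $a \fcover Y$ gives $a \fleq Y$, hence $aY = Y$, and similarly $b \fcover Y$ gives $bY = Y$. Chaining these equalities yields $ab = aY = Y = bY = ba$, and the remaining two terms follow from $aW = ab = Y$ and $bX = ba = Y$, which establishes the entire string of equalities claimed in the statement.

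There is no substantive obstacle here: the corollary is an immediate specialization of \autoref{lem:diff-support}. The only point requiring any care is the observation that the shared vertex $Y$ is simultaneously a chamber weakly above $a$ and a chamber weakly above $b$, so that the evaluation $aY = Y = bY$ can be applied to both products at once. Everything else is bookkeeping of the cover relations and a single use of the face poset identity $a \fleq Y \iff aY = Y$.
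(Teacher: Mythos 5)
Your proposal is correct and follows essentially the same route as the paper: both apply \autoref{lem:diff-support} in each ordering to get $ab = aY = aW$ and $ba = bX = bY$, then use the face poset relation at the shared chamber $Y$ to conclude $aY = Y = bY$ and hence $ab = ba$. No differences worth noting.
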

\begin{proof}
    This is a direct result of the previous lemma combined with the definition of face poset.
    By \autoref{lem:diff-support} we have $ba = bX = bY$ and $ab = aW = aY$.
    By definition of the face poset we have $aY = Y = bY$.
    Combining these two results we get $ab = ba$ as desired.
\end{proof}

The case for when the supports are the same is a little more complicated, but still describable.
Recall that our edges are labeled by an ordering on the fibers of the support map, \ie $a_1, a_2, b_1 \in S$ implies $\supp(a_1) = \supp(a_2) \neq \supp(b_1)$ and $a_1 \neq a_2$.
\begin{lemma}
    \label{lem:same-support-bij}
    Let $S$ be a finite MC left regular band with adjacency graph $\Gamma_\mcC$.
    Let $a_1$ and $a_2$ be labels of two edges.
    Suppose further that $\supp(a_1) = \supp(a_2)$.
    There exists a bijection
    \[
        \alpha: \mcC \cap S_{\fgeq a_1} \to \mcC \cap S_{\fgeq a_2}
    \]
    where for $X \in \mcC \cap S_{\fgeq a_1}$ there exists a unique $X' \in \mcC \cap S_{\fgeq a_2}$ such that $\alpha(X) = X' = a_2X$ and $\alpha^{-1}(X') = X = a_1X'$.
\end{lemma}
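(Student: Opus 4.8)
The plan is to define the map concretely by left-multiplication, setting $\alpha(X) = a_2 X$, and to prove it is a bijection by writing down an inverse of the same form, $\beta(X') = a_1 X'$, and checking the two composites are identities. The engine of the whole argument is a single consequence of the hypothesis $\supp(a_1) = \supp(a_2)$: by the simplification of the support map recorded in \autoref{ssec:support-map}, namely $\supp(y) \cleq \supp(x)$ if and only if $xy = x$, equal supports force both $a_1 a_2 = a_1$ and $a_2 a_1 = a_2$.

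First I would verify that $\alpha$ is well defined, i.e.\ that $a_2 X$ genuinely lands in $\mcC \cap S_{\fgeq a_2}$ whenever $X \in \mcC \cap S_{\fgeq a_1}$. For the chamber condition, recall that a chamber is exactly an element whose support is the maximal element of the support lattice; since $\supp(a_2 X) = \supp(a_2) \join \supp(X)$ and $\supp(X)$ is already maximal, $a_2 X$ is again a chamber. For the order condition, \autoref{lem:fleq} gives $a_2 \fleq a_2 X$. By the symmetric argument, $\beta(X') = a_1 X'$ lands in $\mcC \cap S_{\fgeq a_1}$.

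The core of the proof is then the computation that $\beta \circ \alpha = \mathrm{id}$ and $\alpha \circ \beta = \mathrm{id}$, and here everything collapses cleanly. Using associativity and $a_1 a_2 = a_1$,
\[
    \beta(\alpha(X)) = a_1(a_2 X) = (a_1 a_2) X = a_1 X,
\]
and since $X \in S_{\fgeq a_1}$ means precisely $a_1 X = X$ by the definition of the face poset, this equals $X$. The reverse composite $\alpha(\beta(X')) = (a_2 a_1) X' = a_2 X' = X'$ is identical, using $a_2 a_1 = a_2$ and $a_2 \fleq X'$. Therefore $\alpha$ is a bijection with $\alpha^{-1} = \beta$; the uniqueness of $X'$ in the statement is just the single-valuedness of $\alpha$, and the two displayed formulas $\alpha(X) = a_2 X$ and $\alpha^{-1}(X') = a_1 X'$ are exactly $\alpha$ and $\beta$.

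I do not anticipate a real obstacle: once one sees that equal supports yield $a_i a_j = a_i$ and that an element above a facet is absorbed by it, the statement is essentially forced. The only step needing genuine care is well-definedness — confirming that multiplying a chamber by a facet of the same support keeps it a chamber and pushes it above the new facet — which is precisely where the support-lattice characterization of chambers and \autoref{lem:fleq} do the work.
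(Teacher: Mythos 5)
Your proof is correct and follows essentially the same route as the paper: both define the map by $X \mapsto a_2X$ with inverse $X' \mapsto a_1X'$ and rest everything on the identities $a_1a_2 = a_1$ and $a_2a_1 = a_2$ forced by equal supports. Your packaging as a two-sided-inverse verification is slightly cleaner than the paper's existence-then-uniqueness argument, but it is the same idea.
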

\begin{proof}
    Let $a_1$ and $a_2$ be defined as above.
    As $\supp(a_1) = \supp(a_2)$ then $a_1a_2 = a_1$ and $a_2a_1 = a_2$.
    Let $X \in \mcC \cap S_{\fgeq a_1}$.
    Since $a_1 a_2 = a_1$ and since $a_1 \fleq X$ we have $X = a_1 X = a_1 a_2 X$.
    Since $a_2 \fleq a_2 X \in \mcC$ this implies there exists $X' \in \mcC \cap S_{\fgeq a_2}$ such that $a_2 X = X'$.
    Therefore $a_1 a_2 X = a_1 X'$, giving us $X = a_1 X'$.
    This $X'$ is unique for if not then $a_1 X' = X = a_1 X''$ implies $a_2 a_1 X' = a_2 a_1 X''$ and $a_2 X' = a_2 X''$ (as $a_2 a_1 = a_2$).
    Finally since $X', X'' \in \mcC \cap S_{\fgeq a_2}$ this implies $X' = X''$ as desired.

    The reverse bijection is similar to above with $1$ and $2$ swapped.
\end{proof}

Using the previous lemma we can get a better understanding of what happens with two arbitrary edges if a facet has exactly two chambers weakly above it.
\begin{lemma}
    \label{lem:same-support}
    Let $S$ be a finite MC left regular band with adjacency graph $\Gamma_\mcC$.
    Let $a_1$ and $a_2$ be labels of two edges with vertices $Z,\,W$ and $X,\, Y$ respectively.
    Suppose further that $\supp(a_1) = \supp(a_2)$ and that $\order{\mcC \cap S_{\fgeq a_1}} = 2$.
    Then either
    \begin{itemize}
        \item $a_1X = W$, $a_1Y = Z$, $a_2W = X$, $a_2Z = Y$, or
        \item $a_1X = Z$, $a_1Y = W$, $a_2W = Y$, $a_2Z = X$.
    \end{itemize}
\end{lemma}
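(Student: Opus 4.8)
The plan is to derive this lemma essentially as a direct consequence of the bijection established in \autoref{lem:same-support-bij}, with the only real content being careful bookkeeping of which chamber maps to which. First I would pin down the two relevant fibers. Since $a_1$ labels an edge of $\Gamma_\mcC$ with endpoints $Z$ and $W$, both $Z$ and $W$ are chambers covering the facet $a_1$, so $\{Z, W\} \subseteq \mcC \cap S_{\fgeq a_1}$; as $Z \neq W$ and $\order{\mcC \cap S_{\fgeq a_1}} = 2$ by hypothesis, this forces $\mcC \cap S_{\fgeq a_1} = \{Z, W\}$. Likewise $\{X, Y\} \subseteq \mcC \cap S_{\fgeq a_2}$ since $X, Y$ are the endpoints of the edge labeled $a_2$.

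Next I would invoke \autoref{lem:same-support-bij}. Because $\supp(a_1) = \supp(a_2)$, that lemma supplies a bijection $\alpha \colon \mcC \cap S_{\fgeq a_1} \to \mcC \cap S_{\fgeq a_2}$ given by $\alpha(U) = a_2 U$ with inverse $\alpha^{-1}(U') = a_1 U'$. In particular $\order{\mcC \cap S_{\fgeq a_2}} = \order{\mcC \cap S_{\fgeq a_1}} = 2$, so the inclusion of the previous paragraph is an equality and $\mcC \cap S_{\fgeq a_2} = \{X, Y\}$. Thus $\alpha$ is a bijection between the two two-element sets $\{Z, W\}$ and $\{X, Y\}$.

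I would then simply enumerate the two possible bijections between a pair of two-element sets and unwind each one using the formulas $\alpha(U) = a_2 U$ and $\alpha^{-1}(U') = a_1 U'$. If $\alpha(Z) = Y$ and $\alpha(W) = X$, then $a_2 Z = Y$ and $a_2 W = X$, and applying $\alpha^{-1}$ gives $a_1 Y = Z$ and $a_1 X = W$, which is precisely the first bullet point. If instead $\alpha(Z) = X$ and $\alpha(W) = Y$, then $a_2 Z = X$ and $a_2 W = Y$, and $a_1 X = Z$ and $a_1 Y = W$, which is the second bullet point. Since these are the only two bijections available, the proof is complete.

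I do not expect any serious obstacle here: once \autoref{lem:same-support-bij} is in hand, the statement is a finite case check on a bijection between two-element sets. The one point requiring care is consistency of notation — the dummy variable named $X$ in \autoref{lem:same-support-bij} must not be conflated with the specific chamber $X$ of the present lemma — so I would phrase the application of $\alpha$ on the abstract elements $Z, W$ rather than reusing the earlier statement's letters verbatim.
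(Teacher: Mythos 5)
Your proof is correct and takes essentially the same route as the paper: both reduce the lemma to \autoref{lem:same-support-bij} and a finite check of how a bijection between two two-element fibers can behave. The only difference is one of economy: you read off directly from the statement of \autoref{lem:same-support-bij} that $U \mapsto a_2U$ and $U' \mapsto a_1U'$ are mutually inverse, which immediately leaves only two coupled pairings, whereas the paper enumerates all four independent combinations of the two maps and eliminates the two mismatched ones by an associativity computation (deriving $W = Z$) --- a computation that amounts to re-proving the inverse relation you used.
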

\begin{proof}
    By \autoref{lem:same-support-bij}, since $\order{\mcC \cap S_{\fgeq a_1}} = 2$ we know that $\order{\mcC \cap S_{\fgeq a_2}} = 2$ as well.
    Therefore there are $4$ different ways our bijections can go considering that we have two potential options for each mapping:
    \begin{align*}
        a_1 X &= W &or& &a_1 X &=  Z &;& & a_2 W &= X &or & &a_2 W &=  Y\\
        a_1 Y &= Z & & &a_1 Y &=  W & & &a_2 Z &= Y & & &a_2 Z &=  X
    \end{align*}
    In other words, our four options come from choosing one of the two maps on the left and one of the two maps on the right.
    We aim to show that taking the middle two maps or taking the outer two maps give us contradictions.
    Recall that $\supp(a_1) = \supp(a_2)$ implies that $a_1 a_2 = a_1$ and $a_2 a_1 = a_2$.

    Suppose first that we take the inner two maps, \ie $a_1X = Z$, $a_1Y = W$, $a_2 W = X$ and $a_2 Z = Y$.
    Then by associativity and our equalities we have
    \[
        W = a_1Y = a_1(a_2Z) = (a_1 a_2)Z = a_1Z = Z
    \]
    contradicting the fact that $W$ and $Z$ are different chambers.
    Similarly, suppose we take our outer two maps, \ie $a_1X = W$, $a_1Y = Z$, $a_2W = Y$ and $a_2Z = X$.
    By a similar approach we have
    \[
        W = a_1 X = a_1 (a_2 Z) = (a_1 a_2) Z = a_1Z = Z
    \]
    giving us a contradiction.

    To show that taking the 1st and 3rd mappings (or similarly the 2nd and 4th mappings) both give us valid options, it suffices to show associativity.
    Without loss of generality we show that $a_1(a_2 Z) = (a_1 a_2) Z$ for the 1st and 3rd mappings.
    All other combinations are done similarly.
    Our 1st and 3rd mappings imply $a_1X = W$, $a_1Y = Z$, $a_2W = X$ and $a_2Z = Y$.
    \begin{gather*}
        a_1(a_2Z) = a_1 Y = Z\\
        (a_1 a_2) Z = a_1 Z = Z
    \end{gather*}
    as desired.
\end{proof}

Combining the previous few lemmas, we end up with a nice description of cycles in the adjacency graph.
\begin{lemma}
    \label{lem:cycles}
    Let $S$ be a finite MC left regular band with adjacency graph $\Gamma_\mcC$.
    Let $a_1$ be the label of some edge such that $\order{\mcC \cap S_{\fgeq a_1}} = 2$.
    Then every simple cycle $\Gamma$ which contains the edge labeled $a_1$ has an even number of edges $a_i$ such that $\supp(a_i) = \supp(a_1)$.
\end{lemma}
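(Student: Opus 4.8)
The plan is to manufacture a two-coloring of the chambers that is flipped by exactly those edges sharing the support of $a_1$, and then to extract the desired parity from the fact that a cycle returns to its starting color. Write $\mcC \cap S_{\fgeq a_1} = \{Z, W\}$ for the two chambers above $a_1$, and define $f \colon \mcC \to \{Z, W\}$ by $f(C) = a_1 C$. This is well defined: since $C$ is a chamber, $\supp(a_1 C) = \supp(a_1) \join \supp(C)$ is the top of the support lattice, so $a_1 C \in \mcC$, while \autoref{lem:fleq} gives $a_1 \fleq a_1 C$; hence $a_1 C \in \mcC \cap S_{\fgeq a_1} = \{Z, W\}$. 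Note $f(Z) = Z$ and $f(W) = W$, so $f$ is a genuine retraction onto the two colors.

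First I would verify that edges whose label has support different from that of $a_1$ preserve $f$. If an edge joins chambers $C$ and $C'$ and carries a label $b$ with $\supp(b) \neq \supp(a_1)$, then both $C$ and $C'$ lie weakly above $b$, so \autoref{lem:diff-support} applied to the pair $a_1, b$ gives $a_1 C = a_1 b = a_1 C'$, that is, $f(C) = f(C')$. Next I would verify the complementary statement: edges whose label has the same support as $a_1$ flip $f$. Suppose an edge carries a label $a_i$ with $\supp(a_i) = \supp(a_1)$ and joins chambers $X$ and $Y$. By \autoref{lem:same-support-bij} the hypothesis $\order{\mcC \cap S_{\fgeq a_1}} = 2$ forces $\order{\mcC \cap S_{\fgeq a_i}} = 2$, so \autoref{lem:same-support} applies to $a_1$ and $a_i$; in either of its two cases one reads off $\{a_1 X, a_1 Y\} = \{Z, W\}$ with $a_1 X \neq a_1 Y$, so $f(X) \neq f(Y)$.

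Finally I would traverse a simple cycle $\Gamma = C_0, C_1, \ldots, C_k = C_0$ containing the edge labeled $a_1$. By the two preceding steps, $f(C_{j+1}) = f(C_j)$ exactly when the traversed edge has support different from $\supp(a_1)$, and $f(C_{j+1}) \neq f(C_j)$ exactly when it has support equal to $\supp(a_1)$. Since $f(C_k) = f(C_0)$, the value of $f$ flips an even number of times around the cycle, and therefore the number of edges $a_i$ with $\supp(a_i) = \supp(a_1)$ is even, as claimed.

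The one step needing care is the flip step: it is where the \emph{local} thinness assumption $\order{\mcC \cap S_{\fgeq a_1}} = 2$ must be promoted to thinness at every facet of the same support, which is precisely what \autoref{lem:same-support-bij} supplies before \autoref{lem:same-support} can be invoked. The preservation step is a one-line consequence of \autoref{lem:diff-support}, and the conclusion is then a routine $\mathbb{Z}/2$ parity count, so the real content lies in recognizing that left multiplication by $a_1$ furnishes the correct invariant.
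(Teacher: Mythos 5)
Your proof is correct and takes essentially the same approach as the paper's: both arguments use left multiplication by $a_1$ as the invariant, preserved across edges of different support (via \autoref{lem:diff-support}) and flipped across edges of equal support (via \autoref{lem:same-support-bij} and \autoref{lem:same-support}). The paper organizes this as an explicit induction computing $a_1 X_i$ and $a_1 Y_i$ along the cycle, whereas you package the identical computation as a global two-coloring $f(C) = a_1 C$ followed by a parity count --- a cleaner presentation of the same idea.
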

\begin{proof}
    Let $\Gamma$ be a simple cycle which contains $a_1$.
    We label $\Gamma$ in the following way:
    \begin{center}
        \begin{tikzpicture}
            [vertex/.style={inner sep=1pt,circle,draw=black,fill=black,thick}, xscale=1.8]
            \coordinate (xn) at (0,0);
            \coordinate (yn) at (1,0);
            \coordinate (x1) at (2,0);
            \coordinate (y1) at (3,0);
            \coordinate (x2) at (4,0);
            \coordinate (y2) at (5,0);
            \coordinate (x3) at (6,0);
            \coordinate (y3) at (7,0);
            \node[vertex] at (xn) {};
            \node[vertex] at (yn) {};
            \node[vertex] at (x1) {};
            \node[vertex] at (y1) {};
            \node[vertex] at (x2) {};
            \node[vertex] at (y2) {};
            \node[vertex] at (x3) {};
            \node[vertex] at (y3) {};
            \draw (xn) -- (yn);
            \draw (x1) -- (y1);
            \draw (x2) -- (y2);
            \draw (x3) -- (y3);
            \draw[dashed] (yn) -- (x1);
            \draw[dashed] (y1) -- (x2);
            \draw[dashed] (y2) -- (x3);
            \draw[dotted] (y3) to[out=0, in=0] (7,1) -- (0,1) to[out=180, in=180] (xn);
            \node[below] at (xn) {$X_n$};
            \node[below] at (yn) {$Y_n$};
            \node[below] at (x1) {$X_1$};
            \node[below] at (y1) {$Y_1$};
            \node[below] at (x2) {$X_2$};
            \node[below] at (y2) {$Y_2$};
            \node[below] at (x3) {$X_3$};
            \node[below] at (y3) {$Y_3$};
            \node[below] at (0.5, 0) {$a_n$};
            \node[below] at (1.5, 0) {$b_1$};
            \node[below] at (2.5, 0) {$a_1$};
            \node[below] at (3.5, 0) {$b_2$};
            \node[below] at (4.5, 0) {$a_2$};
            \node[below] at (5.5, 0) {$b_3$};
            \node[below] at (6.5, 0) {$a_3$};
        \end{tikzpicture}
    \end{center}
    In this graph each $a_i$ is such that $\supp(a_i) = \supp(a_1)$.
    Each $b_i$ is a chain of edges such that for any edge labeled $x$ in $b_i$ then $\supp(a_1) \neq \supp(x)$.
    Note that $n$ could equal $1$.

    By \autoref{lem:diff-support} and \autoref{cor:diff-support} for every edge labeled $e_i$ in $b_i$ and for every vertex $V_i$ in $b_i$ we have $a_{i-1}e_i = a_{i-1}V_i = Y_{i-1}$ for all $i$ where $a_0 = a_n$.
    Similarly we have $a_{i}e_i = a_i V_i = X_i$ for all $i$.
    In particular, we have $a_{i-1}X_i = Y_{i-1}$ and $a_i Y_{i-1} = X_i$.
    Using these equalities with \autoref{lem:same-support}, combined with the fact that $\order{\mcC \cap S_{\fgeq a_1}} = 2$ , we have that $a_{i-1}Y_i = X_{i-1}$ and $a_{i}X_{i-1} = Y_i$.

    Plugging in $i = 1$ we have $a_1 Y_2 = X_1$ and $a_1 X_2 = Y_1$.
    Using the above for $i = 2$ and the fact that $a_1 a_2 = a_1$ since $\supp(a_1) = \supp(a_2)$ we have
    \begin{gather*}
        a_1 Y_3 = a_1 a_2 X_2 = a_1 X_2 = Y_1\\
        a_1 X_3 = a_1 a_2 Y_2 = a_1 Y_2 = X_1
    \end{gather*}
    Going up inductively, we have that
    \[
        a_1 Y_i = \begin{cases}
            Y_1& \text{if $i$ is odd}\\
            X_1& \text{otherwise}
        \end{cases}
        \quad
        a_1 X_i = \begin{cases}
            X_1& \text{if $i$ is odd}\\
            Y_1& \text{otherwise}
        \end{cases}
    \]
    Since cyclically $a_0$ and $a_n$ are identified, we know that $a_1X_n = Y_1$.
    In other words, $n$ must be even as desired.
\end{proof}

MC left regular bands which have exactly two chambers above every facet are known as \defn{thin} MC left regular bands.
In fact, thin MC left regular bands are precisely the MC left regular band where every facet appears as an edge, and there are no minimal cycles which are triangles.

~\\

\begin{lemma}
    \label{lem:thin_mc_lrb}
    Let $S$ be a finite MC left regular band with adjacency graph $\Gamma_\mcC$.
    Every element covered by a chamber appears as an edge in $\Gamma_\mcC$ and there are no triangles in $\Gamma_\mcC$ if and only if $S$ is thin.
\end{lemma}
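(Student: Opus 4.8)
The plan is to prove the two implications separately, leaning on the structural lemmas already in hand. Throughout, keep in mind the following bookkeeping for a facet $x$ (an element covered by a chamber): the edges of $\Gamma_\mcC$ labeled $x$ run exactly between the chambers covering $x$, and if $k = \order{\mcC \cap S_{\fgeq x}}$ chambers lie above $x$ then every one of the $\binom{k}{2}$ pairs is adjacent through $x$, so the label $x$ occurs $\binom{k}{2}$ times (the meet-semilattice condition, via \autoref{lem:one-edge}, guarantees these are the \emph{only} edges between those pairs). Hence $x$ appears as an edge iff $k \geq 2$, and $x$ appears exactly once iff $k = 2$. In these terms, ``$S$ is thin'' is precisely the assertion that every facet appears as an edge exactly once, which is what ties the two sides of the equivalence together.

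For the reverse direction, assume every facet appears as an edge and that $\Gamma_\mcC$ has no triangles. Since there are no triangles at all, no triangle contains any prescribed edge, so \autoref{cor:no-cycle-implies-exactly-once} applies to every label and shows that each label which appears does so exactly once. By hypothesis every facet appears; hence every facet appears exactly once, which by the preliminary remark means $\order{\mcC \cap S_{\fgeq x}} = 2$ for every facet $x$. Therefore $S$ is thin.

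For the forward direction, assume $S$ is thin. Each facet $x$ then has exactly two chambers above it, which are adjacent through $x$, so $x$ appears as an edge; this gives the first half at once. For the absence of triangles, suppose for contradiction that $C_1, C_2, C_3$ form a triangle with $C_1C_2$ labeled $a$, $C_2C_3$ labeled $b$, and $C_1C_3$ labeled $c$. First, the labels are pairwise distinct: if, say, $a = b$, then $a$ would be covered by the three distinct chambers $C_1, C_2, C_3$, contradicting thinness. Now case on the supports. If the three supports are not all equal, then after renaming we may assume $c$ is an edge whose support differs from both of the others; applying \autoref{cor:diff-support} to the adjacent pair $c, a$ (sharing the vertex $C_1$) gives $c\,C_2 = C_1$, while applying it to the adjacent pair $c, b$ (sharing the vertex $C_3$) gives $c\,C_2 = C_3$, and since $C_1 \neq C_3$ this is a contradiction. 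If instead $\supp(a) = \supp(b) = \supp(c)$, then by thinness $\order{\mcC \cap S_{\fgeq a}} = 2$, so \autoref{lem:cycles} applies to the simple cycle $C_1C_2C_3$ and forces the number of its edges with support $\supp(a)$ to be even; but all three edges carry this support, an odd count, again a contradiction. Hence no triangle exists, finishing the forward direction.

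I expect the forward direction, and within it the mixed-support case, to be the main obstacle: the clean contradiction there rests on reading off from \autoref{cor:diff-support} two incompatible values of the single product $c\,C_2$, which in turn requires matching the corollary's vertex roles ($X, Y, W$) correctly to $C_1, C_2, C_3$ in each of the two applications. The all-equal case is where thinness is genuinely consumed (through the parity statement of \autoref{lem:cycles}), consistent with the equivalence being a true characterization of thinness rather than a general fact.
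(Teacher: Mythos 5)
Your proof is correct, and for the direction the paper actually argues (the graph conditions imply thinness) you take exactly the paper's route via \autoref{cor:no-cycle-implies-exactly-once}. The only difference is that you spell out the converse, which the paper dismisses as ``clear'': your case analysis on the supports of a putative triangle's three labels, using \autoref{cor:diff-support} to extract the incompatible values $cC_2 = C_1$ and $cC_2 = C_3$ in the mixed-support case and the parity statement of \autoref{lem:cycles} in the equal-support case, is a valid and genuinely informative filling-in of that gap.
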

\begin{proof}
    Suppose that every element covered by a chamber appears as an edge in $\Gamma_\mcC$ and there are no triangles in $\Gamma_\mcC$.
    Since every element covered by a chamber appears as an edge, then $\order{\mcC \cap S_{\fgeq a}} \geq 2$ for all edge labels $a$.
    Furthermore, as there are no triangles, by \autoref{cor:no-cycle-implies-exactly-once}, every edge label appears exactly once.
    This further implies that $\order{\mcC \cap S_{\fgeq a}} = 2$ for all edge labels $a$.
    Therefore $S$ is thin.

    The reverse direction is clear.
\end{proof}

We can use the previous two lemmas to give us a nice graphical description for any thin MC left regular band.
\begin{theorem}
    \label{cor:to-graph}
    Let $S$ be a finite thin MC left regular band.
    Then the adjacency graph $\Gamma_\mcC$ is an edge-labeled graph where every simple cycle has an even number of edges labeled by elements with the same support and each label appears exactly once.
\end{theorem}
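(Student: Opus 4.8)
The plan is to assemble the theorem directly from the lemmas already established, since both conclusions are essentially recorded in \autoref{lem:cycles}, \autoref{cor:no-cycle-implies-exactly-once}, and \autoref{lem:thin_mc_lrb}; the only real work is checking that the thinness hypothesis supplies exactly what each of those results requires.

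First I would dispatch the claim that each label appears exactly once. Since $S$ is thin, \autoref{lem:thin_mc_lrb} tells us that every element covered by a chamber appears as an edge in $\Gamma_\mcC$ and that $\Gamma_\mcC$ contains no triangles. In particular, for any fixed edge label $a$ there is no cycle of length three through an edge labeled $a$, so \autoref{cor:no-cycle-implies-exactly-once} forces $a$ to occur exactly once. Combined with the fact that every facet does occur as an edge, this yields the ``each label appears exactly once'' half of the statement.

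Next I would handle the parity condition on supports. By definition, thinness means precisely that $\order{\mcC \cap S_{\fgeq a}} = 2$ for every facet $a$, which is exactly the hypothesis of \autoref{lem:cycles}. So I would fix an arbitrary simple cycle $\Gamma$ together with a support value $s$ occurring among its edge labels; choosing any edge $a_1$ of $\Gamma$ with $\supp(a_1) = s$, \autoref{lem:cycles} says that $\Gamma$ contains an even number of edges $a_i$ with $\supp(a_i) = s$. Running this over every support class appearing in $\Gamma$ gives the ``even number of edges with the same support'' half. Summing these even contributions over the finitely many support classes that partition the edges of $\Gamma$ also recovers the cleaner assertion advertised in the abstract, namely that every simple cycle has an even number of edges in total.

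The argument is almost entirely bookkeeping, so I do not expect a genuine obstacle; the one place to stay attentive is making the hypotheses line up cleanly, specifically that thinness simultaneously delivers the no-triangle condition needed to invoke \autoref{cor:no-cycle-implies-exactly-once} and the exact cardinality $\order{\mcC \cap S_{\fgeq a}} = 2$ needed to invoke \autoref{lem:cycles}, and that the distinct support values genuinely partition the edge set of $\Gamma$ so that the per-class parities may be combined into the global count.
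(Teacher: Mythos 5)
Your proposal is correct and follows essentially the same route as the paper: the parity claim is obtained by feeding the thinness hypothesis $\order{\mcC \cap S_{\fgeq a}} = 2$ into \autoref{lem:cycles}, exactly as the published two-line proof does. You are in fact slightly more careful than the paper, which leaves the ``each label appears exactly once'' half implicit; your derivation of it from \autoref{lem:thin_mc_lrb} together with \autoref{cor:no-cycle-implies-exactly-once} is valid (and could even be shortened to the direct observation that a facet with exactly two chambers above it gives rise to exactly one edge).
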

\begin{proof}
    By definition of thin, for every facet $a$, $\order{\mcC \cap S_{\fgeq a}} = 2$.
    Therefore by \autoref{lem:cycles}, every simple cycle has an even number of edges labeled by elements with the same support as desired.
\end{proof}

\hugeskip

\section{Thin left regular band graphs}\label{sec:thin-left-regular-band-graphs}
Motivated by the nice description of graphs coming from thin MC left regular bands, we reverse this process by describing a type of graph which will generate a (finite) thin MC left regular band.
Let $G$ be a (simple) graph $(V, E)$ such that $V = \set{A_i}{i \in I}$ is the set of vertices for some integer interval $I$ and $E$ is the set of labeled edges:
\[
    E = \set{(A_i, A_j, (a,b))}{(a,b) \in \NN^2 \text{ and } A_i, A_j \in V,\, i \neq j}
\]
where $(a,b)$ is the label of the edge between the vertices $A_i$ and $A_j$ and such that no two edges have the same label.
Without loss of generality, we take $I$ to be the set $[n]$ and, as the labels $(a,b)$ are unique, we let $(a,b)$ represent both the label and the edge itself.
Given a path $\Gamma$ in $G$ we let $\Gamma^a$ be the number of edges in $\Gamma$ whose first component is $a$, \ie $\Gamma^a = \order{\set{(a, i) \in \Gamma}{i \in \NN}}$.
Suppose further that $G$ has the property that for each simple cycle $\Gamma$ in $G$ then $\Gamma^a$ is even for all $a \in \NN$.
We call such a graph $G$ with the above properties a \defn{thin left regular band graph}, or more simply a \defn{thin LRB graph}.

\newcommand\cpath[3]{ {{}_{#1}{\Gamma}_{#2}^{#3}} }
Given any thin LRB graph $G$, we can define a thin MC left regular band using the following conventions.
For ease of notation, given two vertices $A$ and $B$, let $\cpath{A}{B}{~}$ denote a minimal length path in $G$ from $A$ to $B$.
Then $\cpath{A}{B}{a}$ denotes the number of edges in $\cpath{A}{B}{~}$ whose first component is $a$.
For all edges $(a_1, b_1)$, $(a_2, b_2)$, $(a_3, b_3)$ and all vertices $A_i$, $A_j$, then we define multiplication in the following way:
\begin{enumerate}
    \item\label{enum:mult-1} $A_i \cdot x = A_i$ for all $x \in V \cup E$.
    \item\label{enum:mult-2} For an edge $(A_i, A_j, (a_1, b_1))$ we have $(a_1, b_1) \cdot A_i = A_i$ and $(a_1, b_1) \cdot A_j = A_j$.
    \item\label{enum:mult-3} If $a_1 = a_2$ then $(a_1, b_1) \cdot (a_2, b_2) = (a_1, b_1)$ and $(a_2, b_2) \cdot (a_1, b_1) = (a_2, b_2)$.
    \item\label{enum:mult-4} For two edges $(A_{i_1}, A_{j_i}, (a_1, b_1))$ and $(A_{i_2}, A_{j_2}, (a_2, b_2))$.
        If $a_1 \neq a_2$, then
        \[
            (a_1, b_1) \cdot A_{i_2} = (a_1, b_1) \cdot A_{j_2} = (a_1, b_1) \cdot (a_2, b_2) = \begin{cases}
                A_{i_1}& \text{if } \order{\cpath{A_{j_1}}{A_{i_2}}{a_1}} \text{ is odd }\\
                A_{j_1}& \text{otherwise}
            \end{cases}
        \]
        If $a_1 = a_2$, then
        \[
            (a_1, b_1) \cdot A_{i_2} = \begin{cases}
                A_{i_1}& \text{if } \order{\cpath{A_{j_1}}{A_{i_2}}{a_1}} \text{ is odd }\\
                A_{j_1}& \text{otherwise}
            \end{cases}
            \quad
            (a_1, b_1) \cdot A_{j_2} = \begin{cases}
                A_{j_1}& \text{if } \order{\cpath{A_{j_1}}{A_{i_2}}{a_1}} \text{ is odd }\\
                A_{i_1}& \text{otherwise}
            \end{cases}
        \]
\end{enumerate}

It turns out a thin LRB graph with the above multiplication is a thin MC left regular band.

{
\begin{theorem}
    Let $G = (V, E)$ be a thin LRB graph as defined above.
    Then $(V \cup E \cup \left\{ 0 \right\}, \cdot)$ is a thin MC left regular band where multiplication is defined as above with the addition that $0$ is the identity element, \ie $x \cdot 0 = 0 \cdot x = x$ for all $x \in V \cup E \cup \left\{ 0 \right\}$.
\end{theorem}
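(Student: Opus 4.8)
The plan is to verify the three defining properties of a thin MC left regular band in turn: first that $(V \cup E \cup \left\{ 0 \right\}, \cdot)$ is a left regular band, then that its face poset is thin and a meet-semilattice, and finally that its adjacency graph is connected. The conceptual heart, and the main obstacle, is associativity; everything else falls out once the multiplication is rewritten in a coordinate-free form. So I would begin by extracting the real content of the even-cycle hypothesis. Fix $a \in \NN$ and consider the $\left\{ 0,1 \right\}$-valued function on edges sending an edge to $1$ if its first component is $a$ and to $0$ otherwise; reading sums modulo $2$, the hypothesis that $\Gamma^a$ is even for every simple cycle $\Gamma$ says exactly that this function sums to $0$ around every cycle, hence that it is a coboundary. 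Concretely, on $G$ (which must be connected, since the minimal paths invoked in the multiplication rules have to exist) there is a function $f_a \colon V \to \left\{ 0,1 \right\}$, unique up to a global additive constant, with $f_a(A_i) + f_a(A_j) \equiv 1 \pmod{2}$ when $A_i A_j$ is an edge of first component $a$ and $f_a(A_i) + f_a(A_j) \equiv 0 \pmod{2}$ otherwise. The parity $\order{\cpath{A}{B}{a}}$ appearing in rule~\ref{enum:mult-4} then equals $f_a(A) + f_a(B) \bmod 2$, so it depends only on the endpoints and the multiplication is well defined.

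The payoff of this step is a uniform description of the action of an edge on a vertex: if $e = (a,b)$ has endpoints $P, Q$, then for every vertex $A_k$ the product $e \cdot A_k$ is the unique endpoint of $e$ whose $f_a$-value equals $f_a(A_k)$ — geometrically, $e$ projects each chamber onto whichever of its two endpoints lies on the same $a$-side. I would check that this single formula reproduces rules~\ref{enum:mult-2} and~\ref{enum:mult-4} in every case, including the subtle point that it makes $e \cdot A_k$ depend only on $A_k$ and not on the auxiliary edge through which $A_k$ is presented in rule~\ref{enum:mult-4}; this independence is precisely where the coboundary property is used.

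With the uniform formula in hand, associativity reduces to a short finite check. By rule~\ref{enum:mult-1} and the identity axiom, $(xy)z = x(yz)$ is immediate unless both $x$ and $y$ are edges and $z$ is a vertex or an edge, so I would treat that case by splitting on whether the first components $a_1, a_2$ of $x, y$ agree and on the type of $z$. When $a_1 \neq a_2$, both products collapse to a single vertex — the endpoint of $x$ on the common $a_1$-side of the two endpoints of $y$ — because the uniform formula is insensitive to which endpoint of $y$ one feeds it. When $a_1 = a_2$, rule~\ref{enum:mult-3} gives $xy = x$, and the uniform formula shows $x(yz) = xz$ for both types of $z$ (reusing rule~\ref{enum:mult-3} when $z$ is an edge of first component $a_1$). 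Idempotency is then immediate from rules~\ref{enum:mult-1} and~\ref{enum:mult-3} and the identity axiom, and left regularity $xyx = xy$ follows from the same case split, since whenever $x$ is an edge the product $xy$ is either $x$ itself or a vertex, and a vertex fixes everything to its right.

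Finally I would read off the order-theoretic properties. The vertices are exactly the chambers by \autoref{lem:equiv-chamber} (each satisfies $A_i \cdot x = A_i$), each edge $(a,b)$ is covered precisely by its two endpoint vertices and by nothing else, and $0$ is the global minimum; thus the face poset has ranks $0, 1, 2$ and every facet lies below exactly two chambers, giving thinness. For the meet-semilattice condition I would either verify the three alternatives of \autoref{thm:meet-semilattice} or, more directly, observe that two chambers have at most one common lower facet because $G$ is simple, two distinct edges have only $0$ below them, and a vertex and an edge meet in that edge or in $0$; hence every pair has a greatest lower bound. Connectivity of the adjacency graph is immediate, as its underlying graph is $G$ itself (two chambers are adjacent exactly when joined by an edge of $G$). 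The main obstacle throughout is the associativity computation, and I expect nearly all of the genuine work to lie in justifying the coboundary reformulation that makes it routine.
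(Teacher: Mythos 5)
Your proposal is correct, and it takes a genuinely different route from the paper. The paper proves associativity by a direct five-way case split on the first components $a_1, a_2, a_3$, applying the multiplication rules literally and, in the two cases where the resulting parities could disagree, observing ad hoc that the two paths involved differ by a cycle with an even number of $a$-edges. You instead spend the even-cycle hypothesis exactly once, up front: reading the indicator of $a$-edges as an $\mathbb{F}_2$-cocycle, you extract a potential $f_a \colon V \to \{0,1\}$ (the standard fact that a $1$-cochain summing to zero on every simple cycle sums to zero on every closed walk, hence is a coboundary on a connected graph), so that $\order{\cpath{A}{B}{a}} \equiv f_a(A) + f_a(B) \pmod 2$ and an edge acts on any chamber by projecting onto its endpoint on the same $a$-side. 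This buys three things the paper leaves implicit or repeats: the multiplication is well defined independently of the choice of minimal path and of the auxiliary edge through which a vertex is presented in rule~\ref{enum:mult-4} (the paper never addresses this); the five associativity sub-cases collapse to the two-line dichotomy $a_1 = a_2$ versus $a_1 \neq a_2$; and the left-regularity check becomes the one-sentence observation that $xy$ is either $x$ or a vertex. The trade-off is that your argument front-loads a reformulation that must itself be verified against rules~\ref{enum:mult-2}--\ref{enum:mult-4} in each branch (including the asymmetric $a_1 = a_2$ branch of rule~\ref{enum:mult-4}, where both products are stated in terms of $\cpath{A_{j_1}}{A_{i_2}}{a_1}$), whereas the paper's computation, though longer, uses only the rules as written. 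Your treatment of thinness, the meet-semilattice property, and connectivity matches the paper's in substance, though you verify the meet-semilattice claim directly from the three-level structure of the poset rather than merely asserting it follows from adjoining the identity.
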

\begin{proof}
    If $(V \cup E, \cdot)$ is a left regular band, it's clear that it must be thin as every edge appears once and has two distinct vertices attached to it, and is connected as $G$ is connected.
    It is a meet-semilattice by the addition of the identity element.

    To show that $(V \cup E, \cdot)$ is a left regular band, we must show it is idempotent, associative and that $xyx = xy$ for all $x, y \in V \cup E$.

    \textbf{Idempotent}: If $x \in E$ then by \autoref{enum:mult-3} we have idempotency. If $x \in V$ then by \autoref{enum:mult-1} we have idempotency.

    \textbf{Associativity}: We aim to show $(xy)z = x(yz)$ for all $x, y, z$.
    We break this down into different cases depending on whether our elements are edges or vertices.

    \begin{itemize}
        \item If $x = A_i$ is a vertex, then by repeated applications of \autoref{enum:mult-1} we have
            \[
                (A_i \cdot y) \cdot z = A_i \cdot z = A_i = A_i \cdot (y \cdot z)
            \]
        \item If $x = (a_1, b_1)$ is an edge and $y = A_i$ is a vertex then
            \[
                (x \cdot A_i) \cdot z = A_j \cdot z = A_j = x\cdot A_i = x \cdot (A_i \cdot z)
            \]
            where the first and third equality ($x \cdot A_i = A_j$) is coming from \autoref{enum:mult-2} or \autoref{enum:mult-4} depending on $x$, the second and fourth equalities are coming from \autoref{enum:mult-1}.
        \item Let $x = (A_{i_1}, A_{j_1}, (a_1, b_1))$, $y = (A_{i_2}, A_{j_2}, (a_2, b_2))$, $z = (A_{i_3}, A_{j_3}, (a_3, b_3))$ and $z' = A_{i_3}$ (without loss of generality).
            We show $(xy)z = x(yz)$ and $(xy)z' = x(yz')$ at the same time.
            This case is dependent on the $a_i$.

            \textbf{$\b{a_1 = a_2 = a_3}$}: then
            \begin{align*}
                \left( (a_1, b_1) \cdot (a_2, b_2) \right) \cdot (a_3, b_3) &\xequal{\autoref{enum:mult-3}}  (a_1, b_1) \cdot (a_3, b_3)\\
                &\xequal{\autoref{enum:mult-3}}  (a_1, b_1) \\
                &\xequal{\autoref{enum:mult-3}}  (a_1, b_1) \cdot (a_2, b_2) \\
                &\xequal{\autoref{enum:mult-3}}  (a_1, b_1) \cdot \left( (a_2, b_2) \cdot (a_3, b_3) \right)
            \end{align*}
            for $z$.
            For the $z'$ case we have
            \[
                \left( (a_1, b_1) \cdot (a_2, b_2) \right) \cdot A_{i_3} 
                \xequal{\autoref{enum:mult-3}}  (a_1, b_1) \cdot A_{i_3} 
                \xequal{\autoref{enum:mult-4}}  \begin{cases}
                    A_{i_1}& \text{if } \order{\cpath{A_{j_1}}{A_{i_3}}{a_1}} \text{ is odd }\\
                    A_{j_1}& \text{otherwise}.
                \end{cases}
            \]
            Likewise
            \begin{align*}
                (a_1, b_1) \cdot \left( (a_2, b_2) \cdot A_{i_3} \right) 
                &\xequal{\autoref{enum:mult-4}} \begin{cases}
                    (a_1, b_1) \cdot A_{i_2}& \text{if } \order{\cpath{A_{j_2}}{A_{i_3}}{a_2}} \text{ is odd }\\
                    (a_1, b_1) \cdot A_{j_2}& \text{otherwise}
                \end{cases}\\
                &\xequal{\autoref{enum:mult-4}}  \begin{cases}
                    A_{i_1}& \text{if } \order{\cpath{A_{j_2}}{A_{i_3}}{a_2}} + \order{\cpath{A_{j_1}}{A_{i_2}}{a_1}} \text{ is even }\\
                    A_{j_1}& \text{otherwise}.
                \end{cases}
            \end{align*}
            But notice that the path from $A_{j_1}$ to $A_{i_3}$ in the second case contains an additional $a_1 \; (= a_2)$ as it traverses the edge $(a_2, b_2)$.
            The two paths defined are either the same path (and therefore equal) or they create a cycle which (by definition) contains an even number of edges labeled $(a_1, -)$.
            In both cases we have the desired equality.

            \textbf{$\b{a_1 \neq a_2 \neq a_3}$}: then
            \begin{align*}
                \left( (a_1, b_1) \cdot (a_2, b_2) \right) \cdot A_{i_3} &\xequal{\autoref{enum:mult-4}}  \left( (a_1, b_1) \cdot (a_2, b_2) \right) \cdot \left( a_3, b_3 \right) \\
                &\xequal{\autoref{enum:mult-4}}  \begin{cases}
                    A_{i_1} \cdot (a_3, b_3)& \text{if }\order{\cpath{A_{j_1}}{A_{i_2}}{a_1}} \text{ is odd}\\
                    A_{j_1} \cdot (a_3, b_3)& \text{otherwise}
                \end{cases}\\
                &\xequal{\autoref{enum:mult-1}}  \begin{cases}
                    A_{i_1} & \text{if }\order{\cpath{A_{j_1}}{A_{i_2}}{a_1}} \text{ is odd}\\
                    A_{j_1} & \text{otherwise}
                \end{cases}\\
                &\xequal{\autoref{enum:mult-4}}  (a_1, b_1) \cdot (a_2, b_2)
            \end{align*}
            Similarly,
            \begin{align*}
                (a_1, b_1) \cdot \left( (a_2, b_2) \cdot A_{i_3} \right) &\xequal{\autoref{enum:mult-4}}  (a_1, b_1) \cdot \left( (a_2, b_2) \cdot (a_3, b_3) \right)\\
                &\xequal{\autoref{enum:mult-4}}  \begin{cases}
                    (a_1, b_1) \cdot A_{i_2}& \text{if }\order{\cpath{A_{j_2}}{A_{i_3}}{a_2}} \text{ is odd}\\
                    (a_1, b_1) \cdot A_{j_2}& \text{ otherwise}
                \end{cases}\\
                &\xequal{\autoref{enum:mult-4}}  (a_1, b_1) \cdot (a_2, b_2)
            \end{align*}
            as desired.

            \textbf{$\b{a_2 = a_3 \neq a_1}$}: Then
            \begin{align*}
                \left( (a_1, b_1) \cdot (a_2, b_2) \right) \cdot (a_3, b_3) &\xequal{\autoref{enum:mult-4}}  \begin{cases}
                    A_{i_1} \cdot (a_3, b_3) & \text{if }\order{\cpath{A_{j_1}}{A_{i_2}}{a_1}} \text{ is odd}\\
                    A_{j_1} \cdot (a_3, b_3) & \text{ otherwise}
                \end{cases}\\
                &\xequal{\autoref{enum:mult-1}}  \begin{cases}
                    A_{i_1} & \text{if }\order{\cpath{A_{j_1}}{A_{i_2}}{a_1}} \text{ is odd}\\
                    A_{j_1} & \text{ otherwise}
                \end{cases}\\
                &\xequal{\autoref{enum:mult-4}}  (a_1, b_1) \cdot (a_2, b_2).
            \end{align*}
            Similarly, $\left( (a_1, b_1) \cdot (a_2, b_2) \right) \cdot A_{i_3} = (a_1, b_1) \cdot (a_2, b_2)$.
            On the other hand, by \autoref{enum:mult-3} we have $(a_1, b_1) \cdot \left( (a_2, b_2) \cdot (a_3, b_3) \right) = (a_1, b_1) \cdot (a_2, b_2)$
            and
            \begin{align*}
                (a_1, b_1) \cdot \left( (a_2, b_2) \cdot A_{i_3} \right) &\xequal{\autoref{enum:mult-4}}  \begin{cases}
                    (a_1, b_1) \cdot A_{i_2}& \text{if } \order{\cpath{A_{j_2}}{A_{i_3}}{a_2}} \text{ is odd}\\
                    (a_1, b_1) \cdot A_{j_2}& \text{otherwise}
                \end{cases}\\
                &\xequal{\autoref{enum:mult-4}}  \begin{cases}
                    A_{i_1}& \text{if } \order{\cpath{A_{j_1}}{A_{i_2}}{a_2}} \text{ is odd}\\
                    A_{j_1}& \text{otherwise}
                \end{cases}\\
                &\xequal{\autoref{enum:mult-4}} (a_1, b_1) \cdot (a_2, b_2)
            \end{align*}
            as desired.

            \textbf{$\b{a_1 = a_3 \neq a_2}$}:
            Then
            \begin{align*}
                \left( (a_1, b_1) \cdot (a_2, b_2) \right) \cdot (a_3, b_3) &\xequal{\autoref{enum:mult-4}}  \begin{cases}
                    A_{i_1} \cdot (a_3, b_3) & \text{if }\order{\cpath{A_{j_1}}{A_{i_2}}{a_1}} \text{ is odd}\\
                    A_{j_1} \cdot (a_3, b_3) & \text{ otherwise}
                \end{cases}\\
                &\xequal{\autoref{enum:mult-1}}  \begin{cases}
                    A_{i_1} & \text{if }\order{\cpath{A_{j_1}}{A_{i_2}}{a_1}} \text{ is odd}\\
                    A_{j_1} & \text{ otherwise}
                \end{cases}\\
                &\xequal{\autoref{enum:mult-4}} (a_1, b_1) \cdot (a_2, b_2)
            \end{align*}
            Similarly, $\left( (a_1, b_1) \cdot (a_2, b_2) \right) \cdot A_{i_3} = (a_1, b_1) \cdot (a_2, b_2)$.
            On the other hand,
            \begin{align*}
                (a_1, b_1) \cdot \left( (a_2, b_2) \cdot A_{i_3} \right) &\xequal{\autoref{enum:mult-4}}  (a_1, b_1) \cdot \left( (a_2, b_2) \cdot (a_3, b_3) \right) \\
                &\xequal{\autoref{enum:mult-4}}  \begin{cases}
                    (a_1, b_1) \cdot A_{i_2}& \text{if } \order{\cpath{A_{j_2}}{A_{i_3}}{a_2}} \text{ is odd}\\
                    (a_1, b_1) \cdot A_{j_2}& \text{otherwise}
                \end{cases}\\
                &\xequal{\autoref{enum:mult-4}} (a_1, b_1) \cdot (a_2, b_2)
            \end{align*}
            as desired.

            \textbf{$\b{a_1 = a_2 \neq a_3}$}: Then
            \begin{align*}
                \left( (a_1, b_1) \cdot (a_2, b_2) \right) \cdot (a_3, b_3) &\xequal{\autoref{enum:mult-3}}  (a_1, b_1) \cdot (a_3, b_3) \\
                &\xequal{\autoref{enum:mult-4}}  (a_1, b_1) \cdot A_{i_3}\\
                &\xequal{\autoref{enum:mult-4}}  \begin{cases}
                    A_{i_1} & \text{if }\order{\cpath{A_{j_1}}{A_{i_2}}{a_1}} \text{ is odd}\\
                    A_{j_1} & \text{ otherwise}
                \end{cases}
            \end{align*}
            On the other hand,
            \begin{align*}
                (a_1, b_1) \cdot \left( (a_2, b_2) \cdot A_{i_3} \right) &\xequal{\autoref{enum:mult-4}}  (a_1, b_1) \cdot \left( (a_2, b_2) \cdot (a_3, b_3) \right) \\
                &\xequal{\autoref{enum:mult-4}}  \begin{cases}
                    (a_1, b_1) \cdot A_{i_2}& \text{if } \order{\cpath{A_{j_2}}{A_{i_3}}{a_2}} \text{ is odd}\\
                    (a_1, b_1) \cdot A_{j_2}& \text{otherwise}
                \end{cases}\\
                &\xequal{\autoref{enum:mult-4}}  \begin{cases}
                    A_{i_1}& \text{if } \order{\cpath{A_{j_2}}{A_{i_3}}{a_2}} + \order{\cpath{A_{j_1}}{A_{i_2}}{a_1}} \text{ is even }\\
                    A_{j_1}& \text{otherwise}.
                \end{cases}
            \end{align*}
            as desired.
            But notice that the path from $A_{j_1}$ to $A_{i_3}$ in the second case contains an additional $a_1 \; (=a_2)$ as it traverses the edge $(a_2, b_2)$.
            The two paths defined are either the same path (and therefore equal) or they create a cycle which (by definition) contains an even number of edges labeled $(a_1, -)$.
            In both cases we have the desired equality.
    \end{itemize}

    \textbf{$\b{xyx = xy}$}: Suppose first that $x, y \in E$, \ie $x = (A_{i_1}, A_{j_1}, (a_1, b_1))$ and $y = (A_{i_2}, A_{j_2}, (a_2, b_2))$.
    If $a_1 = a_2$, then by \autoref{enum:mult-3} we have
    \[
        (a_1, b_1) \cdot \left( (a_2, b_2) \cdot (a_1, b_1)\right) = (a_1, b_1) \cdot (a_2, b_2)
    \]
    If $a_1 \neq a_2$, then we use \autoref{enum:mult-4}.
    If $\order{\cpath{A_{j_1}}{A_{i_2}}{a_1}}$ is odd then
    \[
        (a_1 , b_1) \cdot (a_2, b_2) \cdot (a_1, b_1) = A_{i_1} \cdot (a_1, b_1) = A_{i_1} = (a_1, b_1) \cdot (a_2, b_2)
    \]
    where the second equality comes from \autoref{enum:mult-1}.
    Similarly, if $\order{\cpath{A_{j_1}}{A_{i_2}}{a_1}}$ is even then
    \[
        (a_1, b_1) \cdot (a_2, b_2) \cdot (a_1, b_1) = A_{j_1} = (a_1, b_1) \cdot (a_2, b_2)
    \]

    Now suppose $x$ is a vertex and $y$ is either an edge or a vertex.
    Then by \autoref{enum:mult-1} being used multiple times we have $xyx = xx = x = xy$.

    On the other hand, if $y = A_i$ is a vertex and $x = (a, b)$ is an edge then
    \[
        (a, b) \cdot \left( A_i \cdot (a, b) \right) = (a, b) \cdot A_i
    \]
    by \autoref{enum:mult-1} as desired.

    We therefore have $xyx = xy$ in all cases.
\end{proof}
}

\begin{remark}
    Notice that every thin LRB graph produces a rank $2$ thin MC left regular band.
    In other words, thin LRB graphs are in bijection with rank $2$ thin MC left regular bands.
    When moving from an arbitrary rank thin MC left regular band to thin LRB graphs (see \autoref{cor:to-graph}) we lose all information below the facets.
    Although it would be nice to generalize this to left regular bands of arbitrary rank, this seems to be a fairly difficult problem.
\end{remark}

\section{Further work}\label{sec:further-work}
This article presents an initial foray into thin MC left regular bands, but there are still many unanswered questions and directions one might chose to study.
In this section we outline some of these open problems.

\subsection{General left regular bands}
In \autoref{sec:mc-left-regular-bands}, we defined the adjacency graph for an arbitrary left regular band, but only studied the adjacency graph for MC left regular bands.
Removing the connected restriction does not change much from a graph perspective, but it is unclear how this affects the associated left regular band.
Understanding the structure of a left regular band whose adjacency graph is not connected would be useful as it allows us to better understand bands similar (in some sense) to the free left regular band.
Is there some decomposition of arbitrary left regular bands into left regular bands whose adjacency graph are connected?

In addition, removing the meet-semilattice condition means that there might be multiple edges between the vertices of our adjacency graph.
This adds some complexity to our graphs, but it would be worthwile to understand how this graph behaves and what the general structure of the graph is.
This would allow us to understand left regular bands from a much more general perspective.

\subsection{Higher rank}
In \autoref{sec:thin-left-regular-band-graphs} we defined thin left regular band graphs.
These turned out to be in bijection with rank $2$ thin MC left regular bands.
It is a straightforward and natural question to ask what happens when we increase the rank.
In rank $3$, we can consider the simple cycles of the graph to be elements of the left regular band (similarly to facets of a $3$-dimensional polytope).
But once we have these additional elements, it is no longer clear how multiplication should work on a global scale, especially as we continue to increase the rank.
This is slightly more manageable in the simplicial case (see the following subsection), but is still a difficult question to answer.

\subsection{Simplicial left regular bands}
Instead of generalizing, one might also restrict even further.
A \defn{simplicial} left regular band is a thin MC left regular band where every interval (in the face poset) from the identity element to a chamber is a Boolean lattice.
Simplicial left regular bands are a natural generalization of simplicial (real) hyperplane arrangements and simplicial oriented matroids.
We have some nice results coming from simplicial left regular bands, but there is still a lot of research to be done to understand these structures better.

One nice result looks towards answering the higher rank question.
Let $S$ be a rank $n$ simplicial left regular band.
The \defn{full rank $k$ subsemigroup of a left regular band $S$} (for $k \leq n$) is the subsemigroup of $S$ whose elements are precisely the elements of rank $\leq k$.
It turns out that the full rank $k$ subsemigroup of a simplicial left regular band is itself a simplicial left regular band.
This property is not necessarily true for thin MC left regular bands since, for example, a full rank $k$ subsemigroup of a thin MC left regular band is not necessarily connected nor thin (although it necessarily has a meet-semilattice face poset).
This seems to imply that simplicial left regular bands are special in some way and thus understanding their structure through their adjacency graphs would be beneficial.

\section{Acknowledgments}

The author would like to acknowledge Franco Saliola for introducing us to the topic of left regular bands and posing a problem which led to this research being carried out. 
We would also like to thank Nantel Bergeron, Marianne Johnson, Mark Kambites, Stuart Margolis, N\'ora Szak\'acs and Mike Zabrocki for many helpful conversations and ideas pertaining to left regular bands.

\printbibliography[title={References}]
\label{sec:biblio}

\end{document}